\newtheorem{theorem}{Theorem}
\newtheorem{lemma}{Lemma}
\newtheorem{example}{Example}
\newtheorem{defn}{Definition}
\newtheorem{cor}{Corollary}
\author{Madhurima Datta}
\address[M.~Datta]{Department of Mathematics, Indian Institute of Technology Kharagpur, India- 721302.}
\email[M.~Datta]{madhurima92.datta@iitkgp.ac.in}
\author{Nitin Gupta}
\address[N.Gupta]{Department of Mathematics, Indian Institute of Technology Kharagpur, India- 721302.}
\email[N.Gupta]{nitin.gupta@maths.iitkgp.ernet.in}
\begin{document}

\title{Stochastic comparison study of the general family of PHR and PRHR distributions}


\maketitle

\begin{abstract}
In this paper, we have obtained conditions on parameters that result in dispersive ordering and star ordering among two unequal sets of random variables from Proportional hazard rate and Proportional reversed hazard rate family of distributions. The n-independent random variables under observation belonging to a multiple-outlier model are given as corollary. The conditions obtained involve simple inequalities among the parameters and class of life distribution corresponding to ageing. Some stochastic ordering results for sample minimum and maximum with dependency based on Archimedean copula has also been studied by varying the location parameters.
\end{abstract}
\textit{Keywords}
Archimedean Copula; Dispersive order; Proportional hazard rate distribution; Proportional reversed hazard rate distribution; Star order.\\

\textbf{MSC}[2010]: 62N05

\justifying
\section{Introduction}
\label{intro}
The series and parallel systems are the most frequent and maximum encountered systems in nature. These systems are statistically referred to as the minimum and the maximum order statistics respectively. Let $X_1,X_2,\ldots, X_n$ be n independent and non-identical random variables from a particular population. Then arranging the random variables according to their magnitude or strength we observe that $X_{1:n}\leq X_{2:n} \leq \ldots \leq X_{n:n}$, where $X_{k:n}$ is known as the k-th order statistic. $X_{k:n}$ represents the lifetime of a (n-k+1)-out-of-n system. In this paper, we focus only on the minimum and maximum order statistic. A great deal of literature is available on the stochastic relationship among the order statistics of various distributions. \par
In particular, our problem deals with the Proportional hazard rate (PHR) model and proportional reversed hazard rate (PRHR) model. We consider $X_1,X_2,\ldots, X_n$ as independent random variables each following the PHR model. The reliability or survival probability of $X_i$ is:
$$P(X_i > x) = \overline{F}_i(x) = [\overline{F}_0(x)]^{\lambda_i} , i=1,2,\ldots,n$$
where $\lambda_i$ is the proportionality parameter. Let $X_0$ be a baseline random variable with baseline distribution $F_0(x)$ and baseline survival function $\overline{F}_0(x)=1-F_0(x)$. Exponential, Weibull, Pareto, Lomax, Kumaraswamy's distributions are examples of PHR model distribution. \cite{1} pioneered the study of stochastic ordering (details about stochastic ordering given in the next section) for k-out-of-n systems which included usual stochastic ordering results for PHR model. \cite{2} studied dispersive and star ordering for general distributions in detail. Later on, many researchers have continued the study and have found many results for PHR model. Some discussed results for exponential distribution and later on extended them for PHR models, this was possible as the random variable corresponding to the cumulative hazard rate function of a PHR family of distribution follows exponential distribution with the proportionality constant as the parameter i.e., if $X$ follows $[\overline{F}(x)]^{\lambda}$, then the cumulative hazard rate function follows Exp($\lambda$) distribution.
\cite{3} demonstrated dispersive ordering between the maximum order statistics of two PHR populations. \cite{4}, \cite{7} observed dispersive ordering between the $2^{nd}$ order statistics (also known as fail-safe systems) from two different populations and derived bounds on the parameters. \cite{5} reviewed the works done for dispersive ordering in parallel systems with components following PHR model. A comprehensive review of the various ordering between the order statistics for random variables belonging from the PHR model has been done by \cite{17}. Recently \cite{own} observed stochastic ordering for Kumaraswamy's and Frechet distributed components.\par
Let $X_1,X_2,\ldots,X_n$ and $Y_1,Y_2,\ldots,Y_n$ be n-independent PHR samples having the same baseline distribution but the parameter vectors are given by \\$(\underbrace{\alpha_1,\ldots, \alpha_1}_{p},\underbrace{\alpha_2,\ldots,\alpha_2}_q)$ and $(\underbrace{\beta_1,\ldots,\beta_1}_p,\underbrace{\beta_2,\ldots,\beta_2}_q)$ respectively, where $p+q = n$. Such an arrangement is described as the multiple-outlier model. \cite{10}, \cite{8} discussed hazard rate and likelihood ratio ordering for parallel systems with multiple-outlier PHR model. For a similar model, \cite{9} expressed conditions on the distribution function for the dispersive ordering of k-th order statistics where the parameter vectors follow majorization relation. \cite{11} found the necessary and sufficient conditions for the hazard rate ordering among the second order statistics. \cite{resilience} examined the stochastic comparison between series and parallel systems where the component lifetimes are dependent, heterogeneous and resilience scaled. \cite{min}, \cite{max} found several conditions for stochastic ordering of minimum and maximum order statistics from a location-scale family of distributions. \cite{range} observed stochastic ordering between the sample ranges where component lifetimes (number of components are different) are independent and follows multiple-outlier exponential distribution and PHR  models. \par
In contrast to the PHR model, proportional reversed hazard rate (PRHR) model was developed. Let $X_i$ follows PRHR model then the distribution function of $X_i$ is given by 
$$P(X_i < x) = F_i(x) = [F_0(x)]^{\theta_i}$$
where $\theta_i$ is the proportionality constant. Some known examples of PRHR model are exponentiated Weibull, exponentiated exponential, exponentiated Gamma, etc. A random variable has decreasing reversed hazard rate(DRHR) if and only if the distribution function is log-concave. It is known that there exists no distribution which is log convex or increasing reversed hazard rate(IRHR) over the entire domain $[0,\infty)$. An IRHR distribution can be constructed if the domain is taken as $(-\infty, \alpha)$ for some finite $\alpha$ (see \cite{12}). \cite{13} observed dispersive ordering for the series systems with components following the PRHR model.\par
In our present study, we have considered two sets of independent PHR and PRHR models where the baseline distribution of both the sets are different and the sample sizes are also different  i.e., the first set of random variables $X_i \sim \overline{F}_i(x) = (\overline{F}_{0}(x))^{\alpha_i}$ for $i=1, 2,\ldots, n_1$ and the second set $Y_i \sim \overline{G}_i(x) = (\overline{G}_{0}(x))^{\beta_i}$ for $i=1, 2,\ldots, n_2$. We have studied dispersive ordering for this model and star ordering has been observed when the baseline distributions remain the same. A similar kind of model has been studied for the PRHR model. Only the survival functions in the above model changes to distribution functions for the PRHR model. While in the other model,  $X_1,\ldots,X_{p_1}$ has survival function $[\overline{F}(x)]^{\alpha_i}$ and $X_{p_1+1},\ldots,X_{n_1}$ has survival function $[\overline{G}
(x)]^{\alpha_i}$. And  $Y_1,\ldots,Y_{p_2}$ has survival function as $[\overline{F}(x)]^{\beta_i}$ whereas the components $Y_{p_2+1},\ldots,Y_{n_2}$ has survival function $[\overline{G}(x)]^{\beta_i}$. Hazard rate ordering  for sample minimum exists for such model, analogously reversed hazard rate ordering for sample maximums exist for PRHR model. A reversed hazard rate ordering for sample maximum (with equal sample sizes) for Pareto distributed random variables has been observed when only the shape parameter varies. \par
Lastly, some results on random variables belonging from the dependent model have been studied where the Archimedean copula has been considered as the survival copula. The studies include the results obtained when the location parameter is varied along with a comparison between two generating functions (super-additive property) and usual stochastic ordering among baseline distributions have been considered.\\
The paper has been constructed as follows: Section 2 includes all the definitions used in the paper, Section 3 contains dispersive, hazard rate and reversed hazard rate ordering among PHR and PRHR model with unequal sample sizes. Section 4 has one result for star ordering between sample minimums with unequal sample sizes. Section 5 considers the results for the dependent model. The various well-known lemmas that have been used in proving the results are attached in the appendix.

\section{Definitions}
Let $X$ and $Y$ be two absolutely continuous random variables with distribution functions $F(x)$ and $G(x)$; reliability functions as $\overline{F}(x)$ and $\overline{G}(x)$; probability density functions as $f(x)$ and $g(x)$; hazard rate functions as $r(x) = \dfrac{f(x)}{\overline{F}(x)}$ and $s(x) = \dfrac{g(x)}{\overline{G}(x)}$; reversed hazard rate functions as $\tilde{r}(x) = \dfrac{f(x)}{F(x)}$ and $\tilde{s}(x) = \dfrac{g(x)}{G(x)}$. Here $F^{-1}$ and $G^{-1}$ are the right continuous quantiles of $X$ and $Y$ respectively. A real valued function $\psi$ is super-additive when $\psi(x_1+x_2) \geq \psi(x_1)+\psi(x_2)$ for all $x_1,x_2 \in Domain(\psi)$. This concept is valid even when the summation is over n-variables. For details on the above definitions we refer the reader to \cite{14}. Next we discuss some of the various stochastic orders available in literature. We refer the reader to \cite{15} for the detail of these orderings.

\begin{defn}
We say $X$ is smaller than $Y$ in 
\begin{enumerate}[(a)]
\item usual stochastic order ($X \leq_{st} Y$) if and only if $\overline{F}(x) \leq \overline{G}(x)$ $\forall$ $x \in (-\infty, \infty)$.

\item hazard rate order $(X \leq_{hr} Y)$ if
 $r(x) \geq s(x), x \in \mathbb{R}$.
Equivalently, if $ \dfrac{\overline{G}(x)}{\overline{F}(x)}$ is increasing in $x$ over the union of the supports of $X$ and $Y$.

\item reversed hazard rate order $(X \leq_{rh} Y)$ if $\tilde{r}(x) \leq \tilde{s}(x), x \in \mathbb{R}$.
Equivalently, if $ \dfrac{G(x)}{F(x)}$ is increasing in $x$ over the union of the supports of $X$ and $Y$.

 \item likelihood ratio order $(X \leq_{lr} Y)$ if $\dfrac{g(x)}{f(x)}$ is increasing in $x$ over the union of the supports of $X$ and $Y$. 
 
 \item dispersive order $(X \leq_{disp} Y)$ if $$F^{-1}(\alpha_2) - F^{-1}(\alpha_1) \leq G^{-1}(\alpha_2) - G^{-1}(\alpha_1) \text{ whenever } 0 < \alpha_1 \leq \alpha_2 < 1.$$
Equivalently, $(X \leq_{disp} Y)$ if and only if
$$G^{-1}(\alpha) - F^{-1}(\alpha) \text{ increases in } \alpha \in (0,1).$$

\item star order $(X \leq_{*} Y)$ if $\dfrac{G^{-1}(t)}{F^{-1}(t)}$ increases in $t \in (0,1)$. 

\end{enumerate}
\end{defn}

\begin{defn}
\normalfont
\textbf{Majorization}: \\

Let $\underline{a} = (a_1, \ldots, a_n)$ and $\underline{b} = (b_1, \ldots, b_n)$ be two real valued vectors then
\begin{enumerate}
\item $\underline{a}$ is majorized by $\underline{b}$   ( $\underline{a} \prec \underline{b}$ ) if
\begin{equation}
 \sum_{i=1}^{n}a_{i:n} = \sum_{i=1}^{n}b_{i:n} ~\text{and}~ \sum_{i=1}^{k}a_{i:n} \geq \sum_{i=1}^{k}b_{i:n} ~\forall~k = 1,\ldots, n-1;
 \end{equation}
\item $\underline{a}$ is weakly submajorized by $\underline{b}$ ( $\underline{a} \prec_{w} \underline{b}$ ) if
\begin{equation}
  \sum_{i=1}^k a_{n-i+1:n} \leq \sum_{i=1}^k b_{n-i+1:n} ~\forall~ k = 1, \ldots, n;
\end{equation}
\item $\underline{a}$ is weakly supermajorized by $\underline{b}$ ( $\underline{a} \prec^{w} \underline{b}$ ) if
\begin{equation}
\sum_{i=1}^k a_{i:n} \geq \sum_{i=1}^k b_{i:n} ~\forall~ k = 1, \ldots, n;
\end{equation}

 where $a_{1:n} \leq \ldots \leq a_{n:n} ~( b_{1:n} \leq \ldots \leq b_{n:n}) $ is the increasing arrangement of $a_1, \ldots, a_n ~(b_1, \ldots, b_n)$.
\end{enumerate}
For $\underline{a}$ and $\underline{b}$, we have $\underline{a} \prec^{w} \underline{b} \Leftarrow \underline{a} \prec \underline{b} \Rightarrow \underline{a} \prec_{w} \underline{b}$.
\end{defn}

\begin{defn}
\normalfont \textbf{Schur-convexity (Schur-concavity)}: A real valued function $\psi$ defined on a subset of $\mathbb{R}^n$ is \emph{Schur-convex (Schur-concave)} if
\begin{equation}
\underline{a} \prec \underline{b} \Rightarrow \psi(\underline{a}) ~\leq (\geq)~ \psi(\underline{b}),
\end{equation}
where $\underline{a} = (a_1, \ldots, a_n)$ and $\underline{b} = (b_1, \ldots, b_n)$ are two real valued vectors.
\end{defn}
Throughout the paper, the notation $a \overset{\text{sgn}}{=} b$ has been used to represent sign of $a$ is same as $b$. The results and lemmas that are used in obtaining the proofs are mentioned in the Appendix.

\section{Dispersive ordering results for unequal sample sizes}

In this section we compare minimum and maxmimum order statistics arising from general Proportional hazard rate and proportional reversed hazard rate models. As a corollary some results for multiple-outlier models has also been included here. The multiple-outlier model has been explained in \cite{10,8} as an independent set of random variables $X_1,X_2,\ldots,X_n$, where $F_{X_i}=F_X$ for $i=1,\ldots,p$ and $F_{X_i}=F_Y$ for $i=p+1,\ldots,n$, necessarily $1 \leq p <n$. When the value of $p=n-1$, this becomes a single-outlier model. Earlier many researchers have studied various results for the comparison of order statistics from multiple-outlier models. \cite{9} considered the following model 
$$(X_1,X_2,\ldots,X_n) \sim (\underbrace{(\overline{F}(x))^{\alpha_1}, \ldots, (\overline{F}(x))^{\alpha_1}}_{p}, \underbrace{(\overline{F}(x))^{\alpha_2}, \ldots, (\overline{F}(x))^{\alpha_2}}_{q})$$
and 
$$(Y_1,Y_2,\ldots,Y_n) \sim (\underbrace{(\overline{F}(x))^{\alpha_1^*}, \ldots, (\overline{F}(x))^{\alpha_1^*}}_{p}, \underbrace{(\overline{F}(x))^{\alpha_2^*}, \ldots, (\overline{F}(x))^{\alpha_2^*}}_{q}).$$
They observed star and dispersive ordering for the k$^{th}$ order statistic by imposing majorization properties over the parameters. In the following paper they primarily discussed hazard rate ordering for exponentially distributed components and derived similar hazard rate ordering results for maximum order statistic with some additional conditions over the parameters. Under the same conditions \cite{11} observed hazard rate ordering for second order statistic. Moreover they found hazard rate orderings when the number of components and number of outliers were different. Whereas \cite{r4} studied maximum order statistic for PHR model (survival function of $X_i$ is $\overline{F}_{X_i}(x) = (\overline{F}(x))^{\alpha_i} $ for $i=1,\ldots,n$) such that the distribution function of $\max_{i \in P} X_i$, $P \subset \{1,2,\ldots,n\}$ is 
$$F_{\max}(x)= Q_P(F(x))$$
where $Q_P$ is a distortion function (continuous and increasing in [0,1], also Q(0)=0, Q(1)=1) and it depends on the underlying copula and the proportionality parameters. Few results were observed for different subsets of $\{1,2,\ldots,n\}$.
We have considered various model in our study which includes models where the 
baseline distributions are same but the shape parameter varies, the baseline distributions are different and the shape parameters are also different. Several researchers have studied multiple-outlier models extensively as it helps in dealing with outliers. Recently, \cite{var} studied some results where the  n-component lifetimes of both the systems are dependent with multiple-outlier proportional hazard rates. In our present study we first observe results for  series systems where the component lifetimes are independent and follows different proportional hazard rates (the number of components in both the systems are not necessarily same) and the results for multiple-outlier models can be derived subsequently.  \\

The following theorem has been observed for series systems with components following multiple-outlier PHR family such that the baseline distribution for both the sets are different.
\begin{theorem}\label{newthm1}
	Let $X_1,X_2, \ldots, X_{n_1}$ be a set of n-independent random variables each belonging from a particular PHR family with parameters $(\alpha_1, \alpha_2,\ldots, \alpha_{n_1})$. We assume that $X_i \sim \overline{F}_i(x) = (\overline{F}_{0}(x))^{\alpha_i}$ for $i=1, 2,\ldots, n_1$. Also $Y_1, Y_2,\ldots, Y_{n_2}$ be another set of n-independent random variables each following PHR family of distributions with a different distribution function and the parameter set is $(\beta_1, \beta_2 \ldots, \beta_{n_2})$. Let $Y_i \sim \overline{G}_i(x) = (\overline{G}_{0}(x))^{\beta_i}$ for $i=1, 2,\ldots, {n_2}$. Under the assumption that $\displaystyle\sum_{i=1}^{n_2} \beta_i \geq \displaystyle\sum_{i=1}^{n_1} \alpha_i$, the baseline distribution function $F_{0}$ is DFR,  and $G_{0} \leq_{hr} F_{0}$ then $Y_{1:n_2} \leq_{disp} X_{1:n_1}$.
\end{theorem}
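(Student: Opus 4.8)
The plan is to reduce the comparison of the two sample minima to a comparison of two single PHR random variables, and then invoke the classical link between hazard-rate order, the DFR property, and dispersive order. First I would exploit independence to collapse each minimum into a PHR variable. Since
\[
\overline{F}_{X_{1:n_1}}(x)=\prod_{i=1}^{n_1}\bigl(\overline{F}_{0}(x)\bigr)^{\alpha_i}=\bigl(\overline{F}_{0}(x)\bigr)^{A},\qquad A=\sum_{i=1}^{n_1}\alpha_i,
\]
and likewise $\overline{F}_{Y_{1:n_2}}(x)=\bigl(\overline{G}_{0}(x)\bigr)^{B}$ with $B=\sum_{i=1}^{n_2}\beta_i$, each minimum is again a PHR random variable with the same baseline and summed parameter. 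Consequently $X_{1:n_1}$ has hazard rate $A\,r_{0}(x)$, where $r_{0}$ is the baseline hazard rate of $F_{0}$, and $Y_{1:n_2}$ has hazard rate $B\,s_{0}(x)$, where $s_{0}$ is the baseline hazard rate of $G_{0}$.

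Next I would establish the hazard-rate order $Y_{1:n_2}\leq_{hr}X_{1:n_1}$. By the definition of $\leq_{hr}$ this requires the hazard rate of $Y_{1:n_2}$ to dominate that of $X_{1:n_1}$ pointwise, i.e. $B\,s_{0}(x)\geq A\,r_{0}(x)$ for all $x$. This follows immediately by combining the two hypotheses: the assumption $G_{0}\leq_{hr}F_{0}$ yields $s_{0}(x)\geq r_{0}(x)$, while $B\geq A$ together with nonnegativity of all the quantities gives $B\,s_{0}(x)\geq A\,s_{0}(x)\geq A\,r_{0}(x)$.

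Then I would observe that the \emph{larger} variable $X_{1:n_1}$ is DFR. Its hazard rate $A\,r_{0}(x)$ is a positive multiple of $r_{0}$, and since $F_{0}$ is assumed DFR, $r_{0}$ is decreasing, hence so is $A\,r_{0}$. Finally I would invoke the well-known lemma (recorded in the Appendix) asserting that if $U\leq_{hr}V$ and either $U$ or $V$ is DFR, then $U\leq_{disp}V$. Applying it with $U=Y_{1:n_2}$ and $V=X_{1:n_1}$, the hazard-rate order from the second step and the DFR property of $X_{1:n_1}$ from the third step together deliver $Y_{1:n_2}\leq_{disp}X_{1:n_1}$, as claimed.

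The argument is essentially structural, so there is no heavy computation to grind through; the main delicacy is bookkeeping the direction of the orders. One must ensure the DFR hypothesis is attached to $X_{1:n_1}$, the right-hand side of the target dispersive order, which is precisely where the baseline DFR assumption lands after the reduction, and one must keep straight that $\leq_{hr}$ reverses the pointwise hazard-rate inequality (the stochastically smaller $Y_{1:n_2}$ carries the larger hazard rate). Once these orientations are fixed, the two hypotheses $B\geq A$ and $G_{0}\leq_{hr}F_{0}$ feed directly into the hazard-rate inequality and the cited lemma closes the proof.
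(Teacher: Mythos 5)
Your proof is correct, but it takes a genuinely different route from the paper's. The paper works directly at the level of quantile functions: it writes $\psi_1(y)=F_{1:n_1}^{-1}(y)-G_{1:n_2}^{-1}(y)$, substitutes $t=(1-y)^{1/\alpha}$, differentiates, and reduces the monotonicity claim to the pointwise inequality $\tfrac{\beta}{\alpha}s_0(z_2)\geq r_0(z_1)$ at the two quantile points $z_1=\overline{F}_0^{-1}(t)$ and $z_2=\overline{G}_0^{-1}(t^{\alpha/\beta})$, which it then verifies by chaining $s_0(z_2)\geq r_0(z_2)\geq r_0(z_1)$ using the hazard-rate order, stochastic order, and DFR hypotheses. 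You instead observe that each minimum is itself a single PHR variable with summed parameter, prove the stronger intermediate conclusion $Y_{1:n_2}\leq_{hr}X_{1:n_1}$ from $Bs_0(x)\geq As_0(x)\geq Ar_0(x)$, note that $X_{1:n_1}$ inherits DFR from $F_0$, and close with the Bagai--Kochar theorem ($U\leq_{hr}V$ and $U$ or $V$ DFR implies $U\leq_{disp}V$). Your route is cleaner, avoids the quantile calculus entirely, and makes explicit that the hypotheses already yield a hazard-rate ordering of the minima, which is information the paper's argument leaves implicit. One correction: that closing lemma is \emph{not} recorded in the paper's Appendix, which contains only the Schur-convexity criteria and the Saunders--Moran lemma; you would need to cite it externally (it is Theorem 3.B.20 of Shaked and Shanthikumar (2007), already in the bibliography, or Bagai and Kochar (1986)). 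With that citation supplied, your argument is complete.
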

\begin{proof}
	The distribution function of $X_{1:n_1}$ and $Y_{1:n_2}$ are 
	\begin{equation}
	\overline{F}_{1:n_1}(x) = \left(\overline{F}_{0}(x)\right)^{\displaystyle\sum_{i=1}^{n_1} \alpha_i},
	\end{equation}
	and,
	\begin{equation}
	\overline{G}_{1:n_2}(x) = \left(\overline{G}_{0}(x)\right)^{\displaystyle\sum_{i=1}^{n_2} \beta_i}
	\end{equation}
	respectively. For simplicity we replace $\displaystyle\sum_{i=1}^{n_1} \alpha_i$ by $ \alpha$ and $\displaystyle\sum_{i=1}^{n_2} \beta_i$ by $\beta$.
	Let
	\begin{align*}
	\psi_1(y) &= F_{1:n_1}^{-1}(y) - G_{1:n_2}^{-1}(y)\\
	&= \overline{F}_0^{-1}\left((1-y)^{1/{\alpha}}\right) - \overline{G}_0^{-1}\left((1-y)^{1/{\beta}}\right)
	\end{align*}
	We are required to prove  $Y_{1:n_2} \leq_{disp} X_{1:n_1}$, i.e., $\psi_1(y)$  is increasing in  $y \in (0,1)$. Hence $Y_{1:n_2} \leq_{disp} X_{1:n_1}$ if and only if 
	$\phi_1(t) = \overline{F}_0^{-1}(t) - \overline{G}_0^{-1}\left(t^{\dfrac{\alpha}{\beta}}\right) \\
	\text{ is decreasing in } t \in (0,1),$ where $t=(1-y)^{1/\alpha}$. Note that
	\begin{align}
	\phi_1^{'}(t) &= -\dfrac{1}{f_0(\overline{F}_0^{-1}(t))} + \dfrac{\alpha}{\beta}\dfrac{t^{\dfrac{\alpha}{\beta} - 1}}{g_0\left(\overline{G}_0^{-1}\left(t^{\dfrac{\alpha}{\beta}}\right)\right)}.
	\end{align}
	We need to show that $\phi_1^{'}(t) \leq 0$, i.e.,
	\begin{equation}\label{m3}
	\dfrac{t}{f_0(\overline{F}_0^{-1}(t))} \geq \dfrac{\alpha}{\beta} \dfrac{t^{\dfrac{\alpha}{\beta}}}{g_0\left(\overline{G}_0^{-1}\left(t^{\dfrac{\alpha}{\beta}}\right)\right)}.
	\end{equation}
	Let $\overline{F}_0^{-1}(t) = z_1$ and $\overline{G}_0^{-1}\left(t^{\dfrac{\alpha}{\beta}}\right) = z_2$,
	\begin{align}\label{m3*}
	\dfrac{\overline{F}_0(z_1)}{f_0(z_1)} &\geq \dfrac{\alpha}{\beta}\dfrac{\overline{G}_0(z_2)}{g_0(z_2)}\nonumber\\
	\Rightarrow s_0(z_2) &\dfrac{\beta}{\alpha} \geq r_0(z_1),
	\end{align}
	where $r_0(z_1) = \dfrac{f_0(z_1)}{\overline{F}_0(z_1)}$ and $s_0(z_2) = \dfrac{g_0(z_2)}{\overline{G}_0(z_2)}$ . Under the hypothesis of the theorem 
	\begin{align*}
	\beta &\geq \alpha\\
	 \Rightarrow t=\overline{F}_0(z_1) &\leq t^{\left(\dfrac{\alpha}{\beta}\right)}=\overline{G}_0(z_2).
	\end{align*}
	 and $G_0 \leq_{hr} F_0 $  implies that $z_2 \leq z_1$ ($G_0 \leq_{st} F_0 $ follows from $G_0 \leq_{hr} F_0 $ subsequently we can derive that $\overline{G}_{0}(z_2)\geq \overline{F}_{0}(z_1) \geq \overline{G}_{0}(z_1)$. Finally the implication is possible as $\overline{G}_{0}$ is a decreasing function.) and $s_0(z_2) \geq r_0(z_2)$ . Also $F_0$ is DFR then, $z_2 \leq z_1 \Rightarrow r_0(z_2) \geq r_0(z_1)$.
	Combining all these we find that \eqref{m3*} holds true. Hence the result. \qed
	
\end{proof}
 The above result provides a general outlook over the PHR distributions. Apart from the fact that the component lifetimes are independent, it is observed that the above theorem is a generalization of Theorem 3.11 from \cite{var}. Here the baseline distributions are different, also the number of components are not same. The theorem holds true when we encounter a multiple-outlier model, we can observe that with the help of the following corollary.

\begin{cor}
	Let $X_1,X_2, \ldots, X_n$ be a set of n-independent random variables each belonging from a particular PHR multiple-outlier family with parameters \\$(\underbrace{\alpha_1, \ldots, \alpha_1}_{p_1}, \underbrace{\alpha_2, \ldots, \alpha_2}_{q_1})$ such that $p_1+q_1 = n_1$. We assume that $X_i \sim \overline{F}_i(x) = (\overline{F}_{0}(x))^{\alpha_1}$ for $i=1, 2,\ldots, p_1$ and  $X_i \sim \overline{F}_i(x) = (\overline{F}_{0}(x))^{\alpha_2}$ for $i= p_1+1,\ldots, n_1$. Also $Y_1, Y_2,\ldots, Y_n$ be another set of n-independent random variables each following PHR multiple-outlier family of distributions with a different distribution function and the parameter set is $(\underbrace{\beta_1, \ldots, \beta_1}_{p_2}, \underbrace{\beta_2, \ldots, \beta_2}_{q_2})$ and $p_2+q_2 = n_2$. Let $Y_i \sim \overline{G}_i(x) = (\overline{G}_{0}(x))^{\beta_1}$ for $i=1, 2,\ldots, p_2$ and  $Y_i \sim \overline{G}_i(x) = (\overline{G}_{0}(x))^{\beta_2}$ for $i= p_2+1,\ldots, n_2$. Under the assumption that $p_2\beta_1 + q_2\beta_2 \geq p_1\alpha_1 + q_1\alpha_2$, the baseline distribution function $F_{0}$ is DFR,  and $G_{0} \leq_{hr} F_{0}$ then $Y_{1:n_2} \leq_{disp} X_{1:n_1}$.
\end{cor}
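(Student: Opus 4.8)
The plan is to obtain this corollary as a direct specialization of Theorem \ref{newthm1}, since the multiple-outlier model is nothing but the particular choice of parameter vectors in which each population carries only two distinct shape values. For the first set the parameter vector $(\alpha_1,\ldots,\alpha_{n_1})$ becomes $(\underbrace{\alpha_1,\ldots,\alpha_1}_{p_1},\underbrace{\alpha_2,\ldots,\alpha_2}_{q_1})$ with $p_1+q_1=n_1$, and similarly the second set uses $\beta_1$ repeated $p_2$ times and $\beta_2$ repeated $q_2$ times, with $p_2+q_2=n_2$.

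First I would record the two parameter sums that enter the hypotheses of Theorem \ref{newthm1}. Under the multiple-outlier structure these collapse to
$$\sum_{i=1}^{n_1}\alpha_i = p_1\alpha_1 + q_1\alpha_2 \quad\text{and}\quad \sum_{i=1}^{n_2}\beta_i = p_2\beta_1 + q_2\beta_2,$$
so that the global condition $\sum_{i=1}^{n_2}\beta_i \geq \sum_{i=1}^{n_1}\alpha_i$ of the theorem is exactly the assumed inequality $p_2\beta_1 + q_2\beta_2 \geq p_1\alpha_1 + q_1\alpha_2$. Equivalently, the survival functions of the two sample minima again reduce to $\overline{F}_{1:n_1}(x) = (\overline{F}_0(x))^{p_1\alpha_1+q_1\alpha_2}$ and $\overline{G}_{1:n_2}(x) = (\overline{G}_0(x))^{p_2\beta_1+q_2\beta_2}$, because taking the minimum of independent components multiplies survival functions and hence adds the exponents.

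The remaining two hypotheses, namely that $F_0$ is DFR and $G_0 \leq_{hr} F_0$, are assumed verbatim in both statements. Having matched every hypothesis, the conclusion $Y_{1:n_2} \leq_{disp} X_{1:n_1}$ follows immediately by applying Theorem \ref{newthm1}. There is no genuine obstacle in this argument: the only point requiring attention is the elementary bookkeeping that the unrestricted parameter sums specialize correctly to the weighted sums $p_1\alpha_1+q_1\alpha_2$ and $p_2\beta_1+q_2\beta_2$, after which the dispersive ordering is inherited directly from the more general theorem.
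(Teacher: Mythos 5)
Your proposal is correct and matches the paper's intended argument: the corollary is obtained as a direct specialization of Theorem \ref{newthm1}, with the parameter sums collapsing to $p_1\alpha_1+q_1\alpha_2$ and $p_2\beta_1+q_2\beta_2$ and all other hypotheses carried over verbatim. The paper itself gives no separate proof, treating the corollary as an immediate consequence exactly as you do.
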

Since $F_0 \leq_{hr} F_0$, hence the above corollary holds when the baseline distributions are same. Some examples are given here that satisfies the condition given in the above theorem. Since variance is a measure of dispersion, $X <_{disp} Y \Rightarrow V(X) < V(Y)$.
\begin{example}
	\normalfont Let ($X_1, X_2, X_3$) and ($Y_1, Y_2, Y_3$) be independent Weibull distributed random variables. The survival function of $X_i$ is $\overline{F}_{k_i}(t)= \exp(-k_it^{\alpha})$ and corresponding to $Y_i$ is $\overline{G}_{k_i^{*}}(t)= \exp(-k_i^{*}t^{\alpha})$, such that the baseline survival function is $\overline{F}(t)= \exp(-t^{\alpha})$. Note that the baseline distribution is DFR when the shape parameter $\alpha < 1$. Consider $\alpha = 0.7, $ $k_1=1.7, k_2 = 2, k_3 = 0.9$ and $k_1^{*} = 1, k_2^{*} = 3, k_3^{*} = 2.3$, here $\displaystyle\sum_{i=1}^3k_i = 4.6$ and $\displaystyle\sum_{i=1}^3k_i^{*} = 6.3$. Then using the formula for variance as
	$$V(X_{1:3})= \left(\dfrac{1}{k_1+k_2+k_3}\right)^{2/\alpha}\left(\Gamma\left(\dfrac{2}{\alpha}+1\right)-\left(\Gamma\left(\dfrac{1}{\alpha}+1\right)\right)^2\right).$$
	We hence obtain $V(X_{1:3})= 0.043782$ and $V(Y_{1:3})= 0.017826$, $V(X_{1:3}) > V(Y_{1:3})$. Thus the results obtained earlier holds in this case.
\end{example}

\begin{example}
	\normalfont Let ($X_1, X_2, X_3$) and ($Y_1, Y_2, Y_3$) be independent Pareto type II (Lomax distributed) random variable with scale parameter =1 (say), such that each $X_i$ has survival function $\overline{F}_{\alpha_i}(t) = (1+t)^{-\alpha_i}$ and $Y_i$ has survival function $\overline{G}_{\alpha_i^{*}}(t) = (1+t)^{-\alpha_i^{*}}$. Then the expectation and variance of $X_{1:3}$ are given by 
	$$E(X_{1:3}) = \dfrac{1}{(\alpha_1 +\alpha_2 +\alpha_3 -1)}, \alpha_1 +\alpha_2 +\alpha_3 > 1; $$
	$$V(X_{1:3}) = \dfrac{\alpha_1 +\alpha_2 +\alpha_3}{(\alpha_1 +\alpha_2 +\alpha_3-2)(\alpha_1 +\alpha_2 +\alpha_3-1)^2}, \alpha_1 +\alpha_2 +\alpha_3 > 2.$$
	If the values for $(\alpha_1 ,\alpha_2 ,\alpha_3)$ and $(\alpha^{*}_1 ,\alpha^{*}_2 ,\alpha^{*}_3)$ be considered as $(1,4,7)$ and $(1.2,3.5,7.2)$ respectively. The variances are given by \\
	$V(X_{1:3}) = 0.009917$, $V(Y_{1:3}) = 0.010117$, i.e., $V(X_{1:3}) < V(Y_{1:3})$. The baseline distribution $\overline{F}_1(t) = (1+t)^{-1}$ is DFR and the sum of the shape parameters, $\sum_{i=1}^3 \alpha_i = 12$ and $\sum_{i=1}^3 \alpha_i^{*} = 11.9$. This supports the result obtained in the previous theorem.
\end{example}
Here we discuss the following result for a parallel system with components following multiple outlier PRHR model.

\begin{theorem}\label{newthm2}
Let $X_1,X_2,\ldots,X_{n_1}$ be a n-independent set of random variables each belonging from PRHR family of distributions with parameters $(\alpha_1,\alpha_2\ldots, \alpha_{n_1})$, such that 
$X_i \sim F_i(x) = (F_{0}(x))^{\alpha_i}$ for $i=1, 2,\ldots, n_1$. Also $Y_1, Y_2,\ldots, Y_{n_2}$ be another set of n-independent random variables each following PRHR family of distributions with a different distribution function and the parameter set is $(\beta_1, \beta_2,\ldots, \beta_{n_2})$. Let $Y_i \sim G_i(x) = (G_{0}(x))^{\beta_i}$ for $i=1, 2,\ldots, n_2$. Then $Y_{n_2:n_2} \leq_{disp} X_{n_1:n_1}$ if the baseline distribution $F_0$ follows IRHR model, $\displaystyle\sum_{i=1}^{n_2}\beta_i \geq \displaystyle\sum_{i=1}^{n_1}\alpha_i$ and $F_0 \leq_{rh} G_0$.
\end{theorem}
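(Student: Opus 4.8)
The plan is to follow the same template as Theorem~\ref{newthm1}, replacing survival functions by distribution functions, the hazard rate by the reversed hazard rate, and the minimum by the maximum. First I would record that for the maxima the distribution functions multiply, so that, writing $\alpha=\sum_{i=1}^{n_1}\alpha_i$ and $\beta=\sum_{i=1}^{n_2}\beta_i$,
\begin{equation*}
F_{n_1:n_1}(x)=\prod_{i=1}^{n_1}\bigl(F_0(x)\bigr)^{\alpha_i}=\bigl(F_0(x)\bigr)^{\alpha},\qquad G_{n_2:n_2}(x)=\bigl(G_0(x)\bigr)^{\beta}.
\end{equation*}
Inverting gives $F_{n_1:n_1}^{-1}(u)=F_0^{-1}\bigl(u^{1/\alpha}\bigr)$ and $G_{n_2:n_2}^{-1}(u)=G_0^{-1}\bigl(u^{1/\beta}\bigr)$. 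By the quantile characterisation of the dispersive order, $Y_{n_2:n_2}\leq_{disp}X_{n_1:n_1}$ is equivalent to
\begin{equation*}
\psi(u)=F_0^{-1}\bigl(u^{1/\alpha}\bigr)-G_0^{-1}\bigl(u^{1/\beta}\bigr)
\end{equation*}
being increasing in $u\in(0,1)$.

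Next I would substitute $t=u^{1/\alpha}$, which is an increasing bijection of $(0,1)$ onto itself with $u^{1/\beta}=t^{\alpha/\beta}$, so that it suffices to show
\begin{equation*}
\phi(t)=F_0^{-1}(t)-G_0^{-1}\bigl(t^{\alpha/\beta}\bigr)
\end{equation*}
is increasing in $t$. Differentiating and multiplying through by $t>0$ reduces $\phi'(t)\geq 0$ to
\begin{equation*}
\frac{F_0(z_1)}{f_0(z_1)}\geq\frac{\alpha}{\beta}\,\frac{G_0(z_2)}{g_0(z_2)},\qquad z_1=F_0^{-1}(t),\quad z_2=G_0^{-1}\bigl(t^{\alpha/\beta}\bigr),
\end{equation*}
that is, to the reversed-hazard-rate inequality $\tilde{s}_0(z_2)\geq\frac{\alpha}{\beta}\,\tilde{r}_0(z_1)$, where $\tilde{r}_0=f_0/F_0$ and $\tilde{s}_0=g_0/G_0$.

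The crux is to locate $z_1$ and $z_2$ relative to one another. Since $\beta\geq\alpha$ and $t\in(0,1)$ we have $t^{\alpha/\beta}\geq t$, hence $G_0(z_2)\geq F_0(z_1)$. The hypothesis $F_0\leq_{rh}G_0$ yields $F_0\leq_{st}G_0$, equivalently $F_0(x)\geq G_0(x)$ for every $x$; applying this at $z_2$ gives $F_0(z_2)\geq G_0(z_2)\geq F_0(z_1)$, and monotonicity of $F_0$ forces $z_2\geq z_1$. With this in hand the required inequality follows from a short chain:
\begin{equation*}
\tilde{s}_0(z_2)\geq\tilde{r}_0(z_2)\geq\tilde{r}_0(z_1)\geq\frac{\alpha}{\beta}\,\tilde{r}_0(z_1),
\end{equation*}
where the first step uses $F_0\leq_{rh}G_0$ (so $\tilde{r}_0\leq\tilde{s}_0$ pointwise), the second uses that $F_0$ is IRHR (so $\tilde{r}_0$ is increasing and $z_2\geq z_1$), and the last uses $\alpha/\beta\leq 1$. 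This establishes $\phi'(t)\geq 0$ and hence the claim.

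I expect the main obstacle to be the middle argument: the hypotheses $F_0\leq_{rh}G_0$ and $\beta\geq\alpha$ push the comparison of $G_0(z_2)$ and $F_0(z_1)$ in opposite directions, and only after passing through the pointwise dominance $F_0\geq G_0$ does the clean ordering $z_2\geq z_1$ emerge. Once that anchor is in place the IRHR property and $\alpha/\beta\leq 1$ make the final chain immediate, so the delicate point is really the correct bookkeeping of which baseline distribution dominates at which argument.
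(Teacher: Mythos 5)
Your proposal is correct and follows essentially the same route as the paper's own proof: the same reduction to $\phi(t)=F_0^{-1}(t)-G_0^{-1}(t^{\alpha/\beta})$ being increasing, the same localisation $z_1\leq z_2$ via $F_0\leq_{st}G_0$, and the same final chain using the IRHR property and $\tilde{r}_0\leq\tilde{s}_0$. The only cosmetic difference is that you deduce $z_1\leq z_2$ by evaluating the stochastic dominance at $z_2$ and using monotonicity of $F_0$, whereas the paper evaluates it at $z_1$ and uses monotonicity of $G_0$; these are interchangeable.
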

\begin{proof}
The distribution function of $X_{n_1:n_1}$ and $Y_{n_2:n_2}$ are
\begin{align*}
F_{n_1:n_1}(x) &= [F_0(x)]^{\displaystyle\sum_{i=1}^{n_1} \alpha_i},\  \text{and},\\
G_{n_2:n_2}(x) &= [G_0(x)]^{\displaystyle\sum_{i=1}^{n_2} \beta_i}
\end{align*}
respectively. Similar to the previous theorem, we take $\displaystyle\sum_{i=1}^{n_1} \alpha_i = \alpha$ and $\displaystyle\sum_{i=1}^{n_2} \beta_i = \beta$. Let 
\begin{align*}
\psi_2(y) &= F_{n_1:n_1}^{-1}(y) - G_{n_2:n_2}^{-1}(y)\\
&= F_0^{-1}(y^{1/\alpha}) - G_0^{-1}(y^{1/\beta}).
\end{align*}
We are required to prove that $Y_{n_2:n_2} \leq_{disp} X_{n_1:n_1}$, i.e., $\psi_2(y)$ is increasing in $y \in (0,1)$.\\  Hence $Y_{n_2:n_2} \leq_{disp} X_{n_1:n_1}$ if and only if   $\phi_2(t) = F_0^{-1}(t) - G_0^{-1}\left(t^{\dfrac{\alpha}{\beta}}\right)$ is increasing in $t \in (0,1)$, where $t=y^{1/\alpha}$. Note that
\begin{align*}
\phi_2^{'}(t) = \dfrac{1}{f_0(F_0^{-1}(t))} - \dfrac{\alpha}{\beta} \dfrac{t^{\dfrac{\alpha}{\beta} - 1}}{g_0\left(G_0^{-1}\left(t^{\dfrac{\alpha}{\beta}}\right)\right)}.
\end{align*}
We need to show that $\phi_2^{'}(t) \geq 0$, i.e., 
\begin{equation}\label{m2}
\dfrac{t}{f_0(F_0^{-1}(t))} \geq \dfrac{\alpha}{\beta} \dfrac{t^{\dfrac{\alpha}{\beta}}}{g_0\left(G_0^{-1}\left(t^{\dfrac{\alpha}{\beta}}\right)\right)}.
\end{equation}
Put $F_0^{-1}(t) = z_1$ and $G_0^{-1}\left(t^{\dfrac{\alpha}{\beta}}\right) = z_2$. From \eqref{m2} it is sufficient to show
\begin{align}\label{m21}
\dfrac{F_0(z_1)}{f_0(z_1)} &\geq \dfrac{\alpha}{\beta} \dfrac{G_0(z_2)}{g_0(z_2)} \nonumber \\
\Leftrightarrow \tilde{s}_0(z_2)&\dfrac{\beta}{\alpha} \geq \tilde{r}_0(z_1).
\end{align}
As  
\begin{align*}
\dfrac{\beta}{\alpha} & \geq 1\\
\Rightarrow t= F_0(z_1)& \leq t^{\dfrac{\alpha}{\beta}}= G_o(z_2).
\end{align*}
Since $F_0 \leq _{rh} G_0$ implies $F_0 \leq_{st} G_0$, hence $G_0(z_1)\leq F_0(z_1)\leq G_0(z_2)$ i.e., $z_1 \leq z_2$. Again $F_0$ follows increasing reversed hazard rate (IRHR) model hence $z_1 \leq z_2 \Rightarrow \tilde{r}_0(z_1)\leq \tilde{r}_0(z_2)$.
  Lastly, $F_0 \leq_{rh} G_0 \Rightarrow \tilde{r}_0(x)\leq \tilde{s}_0(x)$ for all $x$, thus $\tilde{r}_0(z_2)\leq \tilde{s}_0(z_2)$. Combining these inequalities we obtain the required result. \qed
\end{proof}

Similar to Theorem 1 we can again obtain a corollary for multiple-outlier model from PRHR distributions.

\begin{cor}
Let $X_1,X_2,\ldots,X_{n_1}$ be a n-independent set of multiple-outlier random variables each belonging from PRHR family of distributions with parameters \\$(\underbrace{\alpha_1,\ldots, \alpha_1}_{p_1},\underbrace{\alpha_2,\ldots,\alpha_2}_{q_1})$, $p_1+q_1=n_1$ We assume that $X_i \sim F_i(x) = (F_{0}(x))^{\alpha_1}$ for $i=1, 2,\ldots, p_1$ and  $X_i \sim F_i(x) = (F_{0}(x))^{\alpha_2}$ for $i= p_1+1,\ldots, n_1$. Also $Y_1, Y_2,\ldots, Y_{n_2}$ be another set of n-independent random variables each following proportional reversed hazard rate multiple-outlier family of distributions with a different distribution function and the parameter set is $(\underbrace{\beta_1, \ldots, \beta_1}_{p_2}, \underbrace{\beta_2, \ldots, \beta_2}_{q_2})$ and $p_2+q_2 = n_2$. Let $Y_i \sim G_i(x) = (G_{0}(x))^{\beta_1}$ for $i=1, 2,\ldots, p_2$ and  $Y_i \sim G_i(x) = (G_{0}(x))^{\beta_2}$ for $i= p_2+1,\ldots, n_2$. Then $Y_{n_2:n_2} \leq_{disp} X_{n_1:n_1}$ if the baseline distribution follows IRHR model, $F_0 \leq_{rh} G_0$ and $p_2\beta_1 + q_2\beta_2 \geq p_1\alpha_1 + q_1\alpha_2$.
\end{cor}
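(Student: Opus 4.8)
The plan is to observe that this corollary is nothing more than the specialization of Theorem \ref{newthm2} to the multiple-outlier parameter configuration, so almost no new work is required. First I would write down the distribution functions of the two maxima. Since the $X_i$ are independent with $F_i(x) = (F_0(x))^{\alpha_1}$ for $i \leq p_1$ and $F_i(x) = (F_0(x))^{\alpha_2}$ for $p_1 < i \leq n_1$, the distribution function of the maximum is the product
\begin{equation*}
F_{n_1:n_1}(x) = \prod_{i=1}^{n_1} F_i(x) = (F_0(x))^{p_1\alpha_1 + q_1\alpha_2},
\end{equation*}
and analogously $G_{n_2:n_2}(x) = (G_0(x))^{p_2\beta_1 + q_2\beta_2}$.

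Next I would set $\alpha := p_1\alpha_1 + q_1\alpha_2$ and $\beta := p_2\beta_1 + q_2\beta_2$. With this notation the two maxima take exactly the form $[F_0(x)]^{\alpha}$ and $[G_0(x)]^{\beta}$ appearing in the proof of Theorem \ref{newthm2}; that is, the multiple-outlier model is a single instance of the general PRHR model treated there, with the total parameter sums $\sum_{i=1}^{n_1}\alpha_i$ and $\sum_{i=1}^{n_2}\beta_i$ collapsing to $\alpha$ and $\beta$ respectively.

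I would then check that the three hypotheses of the corollary coincide with those of Theorem \ref{newthm2} under this identification: the baseline $F_0$ is IRHR; the reversed-hazard-rate comparison $F_0 \leq_{rh} G_0$ is unchanged; and the sum condition $\sum_{i=1}^{n_2}\beta_i \geq \sum_{i=1}^{n_1}\alpha_i$ reads precisely as the assumed inequality $p_2\beta_1 + q_2\beta_2 \geq p_1\alpha_1 + q_1\alpha_2$. Invoking Theorem \ref{newthm2} with these data then delivers $Y_{n_2:n_2} \leq_{disp} X_{n_1:n_1}$ at once.

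There is essentially no obstacle here; the only point deserving care is confirming that the argument of Theorem \ref{newthm2} uses the individual parameters $\alpha_i,\beta_i$ only through their sums. Inspection of that proof shows the reasoning depends on $\alpha$ and $\beta$ solely via the ratio $\alpha/\beta$ and the inequality $\beta \geq \alpha$, both of which are determined entirely by the aggregate sums. Hence the reduction is clean and the corollary follows immediately.
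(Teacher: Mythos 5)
Your proposal is correct and matches the paper's intent exactly: the corollary is stated as an immediate specialization of Theorem \ref{newthm2}, with the parameter sums collapsing to $p_1\alpha_1+q_1\alpha_2$ and $p_2\beta_1+q_2\beta_2$, which is precisely the reduction you carry out. The paper offers no separate proof, and your observation that Theorem \ref{newthm2} depends on the parameters only through their sums is the right (and only) point needing verification.
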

In the next theorem, we provide a result for series systems with unequal number of components following PHR models with different baseline distributions.


\begin{theorem}
Consider a system of $n_1$ components, where the lifetime of each component is represented by the random variable $X_1,X_2,\ldots, X_{n_1}$ respectively such that each of $X_1,\ldots,X_{p_1}$ has survival function $[\overline{F}(x)]^{\alpha_i}$, $i=1,2,\ldots,p_1$ and \\$X_{p_1+1},\ldots,X_{n_1}$ has survival function $[\overline{G}
(x)]^{\alpha_i}$, $i=p_1+1,p_1+2,\ldots,n_1$. Similarly another system with $n_2$ components is considered where the components $Y_1,\ldots,Y_{p_2}$ has survival function as $[\overline{F}(x)]^{\beta_i}$, $i=1,2,\ldots,p_2$ whereas the components $Y_{p_2+1},\ldots,Y_n$ has survival function $[\overline{G}
(x)]^{\beta_i}$,  $i=p_2+1,p_2+2,\ldots,n_2$. Then $X_{1:n_1} \leq_{hr} Y_{1:n_2}$ whenever $\displaystyle\sum_{i=1}^{p_1}\alpha_i > \displaystyle\sum_{i=1}^{p_2}\beta_i$ and $\displaystyle\sum_{i=p_1+1}^{n_1}\alpha_i  > \displaystyle\sum_{i=p_2+1}^{n_2}\beta_i$.
\end{theorem}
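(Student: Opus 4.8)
The plan is to exploit the fact that, for a series system of independent components, the hazard rate of the minimum is simply the sum of the component hazard rates; this collapses the whole problem to a pointwise comparison. First I would record the survival functions of the two minima. By independence the survival function of a minimum factorises as the product of the component survival functions, and grouping the two baseline families gives
\begin{equation*}
\overline{F}_{1:n_1}(x)=[\overline{F}(x)]^{a_1}\,[\overline{G}(x)]^{a_2}, \qquad \overline{G}_{1:n_2}(x)=[\overline{F}(x)]^{b_1}\,[\overline{G}(x)]^{b_2},
\end{equation*}
where $a_1=\sum_{i=1}^{p_1}\alpha_i$, $a_2=\sum_{i=p_1+1}^{n_1}\alpha_i$, $b_1=\sum_{i=1}^{p_2}\beta_i$ and $b_2=\sum_{i=p_2+1}^{n_2}\beta_i$. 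In this notation the two hypotheses are exactly $a_1>b_1$ and $a_2>b_2$.

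Next I would take negative logarithmic derivatives, which turns each product into a sum. Writing $r_F=f/\overline{F}$ and $r_G=g/\overline{G}$ for the two baseline hazard rates, this yields
\begin{equation*}
r_{X_{1:n_1}}(x)=a_1\,r_F(x)+a_2\,r_G(x), \qquad r_{Y_{1:n_2}}(x)=b_1\,r_F(x)+b_2\,r_G(x).
\end{equation*}
By the definition of the hazard rate order, establishing $X_{1:n_1}\leq_{hr}Y_{1:n_2}$ is equivalent to proving $r_{X_{1:n_1}}(x)\geq r_{Y_{1:n_2}}(x)$ for every $x$, i.e.
\begin{equation*}
(a_1-b_1)\,r_F(x)+(a_2-b_2)\,r_G(x)\geq 0.
\end{equation*}

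The concluding step is immediate: under the hypotheses both coefficients $a_1-b_1$ and $a_2-b_2$ are strictly positive, while the baseline hazard rates $r_F$ and $r_G$ are nonnegative throughout the common support (each being a density divided by a survival function). Hence the displayed combination is nonnegative everywhere, which is the desired inequality.

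Honestly, there is no serious obstacle to overcome once additivity of the series hazard rate is invoked; the argument never appeals to any ageing, monotonicity, or log-concavity property of the baselines, in sharp contrast to the dispersive-order theorems above. The only point deserving a line of care is the bookkeeping in the first step---checking that every $X$-component built on $\overline{F}$ feeds solely into the coefficient $a_1$ and every one built on $\overline{G}$ solely into $a_2$---after which the required sign condition is precisely the pair of assumptions in the statement.
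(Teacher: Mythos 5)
Your proposal is correct and is essentially the same argument as the paper's: the paper differentiates the ratio $\overline{F}_{1:n_1}(x)/\overline{G}_{1:n_2}(x)$ and its derivative is exactly $-\bigl[(a_1-b_1)r_F(x)+(a_2-b_2)r_G(x)\bigr]$ times the ratio, so showing that ratio is decreasing is the same pointwise sign computation as your direct comparison of the summed hazard rates. The two presentations differ only in which equivalent characterization of the hazard rate order is invoked.
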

 
\begin{proof}
The survival function of $X_{1:n_1}$ is 
$$\overline{F}_{1:n_1}(x)=[\overline{F}(x)]^{\displaystyle\sum_{i=1}^{p_1}\alpha_i}[\overline{G}(x)]^{\displaystyle\sum_{i=p_1+1}^{n_1}\alpha_i},$$
and the survival function of $Y_{1:n_2}$ is 
$$\overline{G}_{1:n_2}(x)=[\overline{F}(x)]^{\displaystyle\sum_{i=1}^{p_2}\beta_i}[\overline{G}(x)]^{\displaystyle\sum_{i=p_2+1}^{n_2}\beta_i}.$$
Consider the ratio 
\begin{equation}\label{ratio}
\dfrac{\overline{F}_{1:n_1}(x)}{\overline{G}_{1:n_2}(x)}= [\overline{F}(x)]^{\left(\displaystyle\sum_{i=1}^{p_1}\alpha_i - \displaystyle\sum_{i=1}^{p_2}\beta_i\right)}[\overline{G}(x)]^{\left(\displaystyle\sum_{i=p_1+1}^{n_1}\alpha_i - \displaystyle\sum_{i=p_2+1}^{n_2}\beta_i\right)}
\end{equation}
Differentiating \eqref{ratio} with respect to x,
\begin{align*}
\dfrac{d}{dx}\left(\dfrac{\overline{F}_{1:n}(x)}{\overline{G}_{1:n}(x)}\right)&= - \dfrac{\overline{F}_{1:n_1}(x)}{\overline{G}_{1:n_2}(x)}\left(\left(\displaystyle\sum_{i=1}^{p_1}\alpha_i - \displaystyle\sum_{i=1}^{p_2}\beta_i\right)\dfrac{f(x)}{\overline{F}(x)} +  \left(\displaystyle\sum_{i=p_1+1}^{n_1}\alpha_i  - \displaystyle\sum_{i=p_2+1}^{n_2}\beta_i\right)\dfrac{g(x)}{\overline{G}(x)}\right)\\
&<0,
\end{align*}
whenever $\displaystyle\sum_{i=1}^{p_1}\alpha_i > \displaystyle\sum_{i=1}^{p_2}\beta_i$ and $\displaystyle\sum_{i=p_1+1}^{n_1}\alpha_i  > \displaystyle\sum_{i=p_2+1}^{n_2}\beta_i$. Hence the result follows. \qed
\end{proof}

If we consider a similar problem wherein the random variables follows a PRHR distribution, we arrive at the following theorem.

\begin{theorem}
	Consider an independent set of $n_1$ random variables \\$X_1,X_2,\ldots, X_{p_1}, X_{p_1+1},\ldots, X_{n_1}$ such that the distribution function of $X_i$, $F_{X_i}(x)=[F(x)]^{\alpha_i}$ for $i=1,2,\ldots,p_1$ and $F_{X_i}(x)=[G(x)]^{\alpha_i}$ for $i=p_1+1,\ldots,n_1$. Another set of $n_2$ independent components $Y_1,Y_2,\ldots, Y_{p_2}, Y_{p_2+1},\ldots, Y_{n_2}$ are such that the distribution function of $Y_i$, $F_{Y_i}(x) = [F(x)]^{\beta_i}$, $i=1,2,\ldots,p_2$ and $F_{Y_i}(x) = [G(x)]^{\beta_i}$ for $i=p_2+1,\ldots,n_2$. Then $X_{n_1:n_1} \geq_{rh} Y_{n_2:n_2}$ whenever $\displaystyle\sum_{i=1}^{p_1}\alpha_i > \displaystyle\sum_{i=1}^{p_2}\beta_i$ and $\displaystyle\sum_{i=p_1+1}^{n_1}\alpha_i  > \displaystyle\sum_{i=p_2+1}^{n_2}\beta_i$.
\end{theorem}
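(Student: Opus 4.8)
The plan is to mirror the structure of the preceding PHR theorem, working now with distribution functions and the reversed hazard rate characterization in place of survival functions and the hazard rate characterization. Because $X_{n_1:n_1}$ is a maximum order statistic of independent components, its distribution function factorizes as a product, which collapses to
\[
F_{X_{n_1:n_1}}(x)=\prod_{i=1}^{p_1}[F(x)]^{\alpha_i}\prod_{i=p_1+1}^{n_1}[G(x)]^{\alpha_i}=[F(x)]^{\displaystyle\sum_{i=1}^{p_1}\alpha_i}[G(x)]^{\displaystyle\sum_{i=p_1+1}^{n_1}\alpha_i},
\]
and likewise $F_{Y_{n_2:n_2}}(x)=[F(x)]^{\sum_{i=1}^{p_2}\beta_i}[G(x)]^{\sum_{i=p_2+1}^{n_2}\beta_i}$.

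First I would invoke the equivalent form of the reversed hazard rate order from the Definitions section: $X_{n_1:n_1}\geq_{rh}Y_{n_2:n_2}$ holds precisely when the ratio $F_{X_{n_1:n_1}}(x)/F_{Y_{n_2:n_2}}(x)$ is increasing in $x$ over the union of the supports. Forming this ratio gives
\[
\dfrac{F_{X_{n_1:n_1}}(x)}{F_{Y_{n_2:n_2}}(x)}=[F(x)]^{\left(\displaystyle\sum_{i=1}^{p_1}\alpha_i-\displaystyle\sum_{i=1}^{p_2}\beta_i\right)}[G(x)]^{\left(\displaystyle\sum_{i=p_1+1}^{n_1}\alpha_i-\displaystyle\sum_{i=p_2+1}^{n_2}\beta_i\right)}.
\]

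By the two hypotheses both exponents are strictly positive. Since $F$ and $G$ are distribution functions they are nonnegative and nondecreasing, so each factor $[F(x)]^{a}$ and $[G(x)]^{b}$ with $a,b>0$ is nondecreasing, and a product of nonnegative nondecreasing functions is again nondecreasing; hence the ratio increases and the claim follows. Equivalently, and in closer parallel to the previous proof, I could take the logarithmic derivative of the ratio and observe that it equals $\left(\sum_{i=1}^{p_1}\alpha_i-\sum_{i=1}^{p_2}\beta_i\right)\frac{f(x)}{F(x)}+\left(\sum_{i=p_1+1}^{n_1}\alpha_i-\sum_{i=p_2+1}^{n_2}\beta_i\right)\frac{g(x)}{G(x)}$, where $f$ and $g$ are the densities of $F$ and $G$; this is a sum of the baseline reversed hazard rates weighted by the positive exponents, hence nonnegative, so the ratio is nondecreasing.

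The main obstacle is essentially absent here. In contrast to the dispersive-ordering theorems, where one must control derivatives of quantile functions and exploit the DFR/IRHR and cross-ordering assumptions, the present statement reduces to the elementary monotonicity of a product of positive powers of two distribution functions. The only point demanding care is bookkeeping the direction of the order---confirming that $X_{n_1:n_1}\geq_{rh}Y_{n_2:n_2}$ corresponds to placing the distribution function of $X_{n_1:n_1}$ in the numerator---after which no further structural assumption on $F$ or $G$ is needed.
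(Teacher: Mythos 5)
Your proposal is correct and follows essentially the same route as the paper: the paper likewise forms the ratio $F_{n_1:n_1}(x)/G_{n_2:n_2}(x)$ and differentiates, obtaining exactly the weighted sum of reversed hazard rates you describe, which is positive under the two hypotheses. Your first variant (a product of nonnegative nondecreasing functions with positive exponents is nondecreasing) is a marginally more elementary phrasing of the same observation, and your identification of the correct direction of the ratio matches the paper.
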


\begin{proof}
	The distribution functions of $X_{n_1:n_1}$ and $Y_{n_2:n_2}$ are
	$$F_{n_1:n_1}(x)= [F(x)]^{\displaystyle\sum_{i=1}^{p_1}\alpha_i}[G(x)]^{\displaystyle\sum_{i=p_1+1}^{n_1}\alpha_i},$$
	$$G_{n_2:n_2}(x)= [F(x)]^{\displaystyle\sum_{i=1}^{p_2}\beta_i}[G(x)]^{\displaystyle\sum_{i=p_2+1}^{n_2}\beta_i}$$
	respectively.
	Differentiating the ratio $\dfrac{F_{n_1:n_1}(x)}{G_{n_2:n_2}(x)}$ with respect to $x$, we observe,
	\begin{align*}
	\dfrac{d}{dx}\left(\dfrac{F_{n_1:n_1}(x)}{G_{n_2:n_2}(x)}\right) &= \dfrac{F_{n_1:n_1}(x)}{G_{n_2:n_2}(x)} \left(\left(\displaystyle\sum_{i=1}^{p_1}\alpha_i - \displaystyle\sum_{i=1}^{p_2}\beta_i\right)\dfrac{f(x)}{F(x)}+ \left( \displaystyle\sum_{i=p_1+1}^{n_1}\alpha_i  - \displaystyle\sum_{i=p_2+1}^{n_2}\beta_i\right)\dfrac{g(x)}{G(x)}\right)\\
	& >0,
	\end{align*}
	whenever $\displaystyle\sum_{i=1}^{p_1}\alpha_i > \displaystyle\sum_{i=1}^{p_2}\beta_i$ and $\displaystyle\sum_{i=p_1+1}^{n_1}\alpha_i  > \displaystyle\sum_{i=p_2+1}^{n_2}\beta_i$ and the result follows. \qed
\end{proof}
The above theorem deals with a more complex set of parameters and baseline distributions as compared to that of theorem 3.7 from \cite{var} where the component lifetimes are dependent but the parameters are restricted and the baseline distributions are all same. 
\section{Star ordering results for unequal sample sizes}
In this section we present a comparison between two systems based on star ordering. The random variables are same as used in the previous section. 
Consider a series system with components following PHR model and have unequal sample sizes.

\begin{theorem}
Let $X_1,X_2,\ldots,X_{n_1}$ be a $n_1$-independent set of non-negative random variables such that $X_i \sim [\overline{F}(x)]^{\alpha_i}$ for $i=1,2,\ldots,n_1$ and $Y_1,Y_2,\ldots,Y_{n_2}$ be another $n_2$-independent set of non-negative random variables such that $Y_i \sim [\overline{F}(x)]^{\beta_i}$ for $i=1,2,\ldots,n_2$, where $n_1$ and $n_2$ may or may not be the same. Then
\item $\displaystyle\sum_{i=1}^{n_1}\alpha_i \leq \displaystyle\sum_{i=1}^{n_2}\beta_i \Rightarrow X_{1:n_1} \geq_{*} Y_{1:n_2}$, whenever $x r(x)$ is decreasing.
\end{theorem}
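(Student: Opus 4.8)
The plan is to follow the quantile-function strategy of Theorems~1 and~2, but now adapted to the star order in place of the dispersive order. First I would record the survival functions of the two sample minima. Since the baseline survival function $\overline{F}$ is common to both families and the components are independent, one has $\overline{F}_{1:n_1}(x) = [\overline{F}(x)]^{\alpha}$ and $\overline{G}_{1:n_2}(x) = [\overline{F}(x)]^{\beta}$, where I abbreviate $\alpha = \sum_{i=1}^{n_1}\alpha_i$ and $\beta = \sum_{i=1}^{n_2}\beta_i$. Inverting these gives the quantile functions $F_{1:n_1}^{-1}(t) = \overline{F}^{-1}((1-t)^{1/\alpha})$ and $G_{1:n_2}^{-1}(t) = \overline{F}^{-1}((1-t)^{1/\beta})$.

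By the definition of star order, $X_{1:n_1} \geq_{*} Y_{1:n_2}$ is equivalent to $\frac{F_{1:n_1}^{-1}(t)}{G_{1:n_2}^{-1}(t)}$ being increasing in $t \in (0,1)$. Putting $w = 1-t$, which reverses the direction of monotonicity, this amounts to showing that $R(w) = \frac{\overline{F}^{-1}(w^{1/\alpha})}{\overline{F}^{-1}(w^{1/\beta})}$ is decreasing in $w \in (0,1)$, which I would establish by controlling the sign of $\frac{d}{dw}\log R(w)$.

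The central computation is the differentiation of $\log R$. Using $\frac{d}{ds}\overline{F}^{-1}(s) = -1/f(\overline{F}^{-1}(s))$ and setting $z_1 = \overline{F}^{-1}(w^{1/\alpha})$ and $z_2 = \overline{F}^{-1}(w^{1/\beta})$ (so that $\overline{F}(z_1) = w^{1/\alpha}$ and $\overline{F}(z_2) = w^{1/\beta}$), the sign of $\frac{d}{dw}\log R(w)$ reduces---after cancelling the common positive factor $1/w$, using $w^{1/\alpha - 1} = \overline{F}(z_1)/w$ together with its analogue, and rewriting $\overline{F}/f = 1/r$---to the single inequality $\alpha\, z_1 r(z_1) \leq \beta\, z_2 r(z_2)$. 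Carrying out this reduction cleanly, and in particular converting the quantile derivatives into the hazard rate $r$, is the step I expect to be the main obstacle.

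Finally I would verify this inequality from the hypotheses. Since $\alpha \leq \beta$ and $w \in (0,1)$, we get $w^{1/\alpha} \leq w^{1/\beta}$, hence $\overline{F}(z_1) \leq \overline{F}(z_2)$; as $\overline{F}$ is decreasing this forces $z_1 \geq z_2$. The assumption that $x r(x)$ is decreasing then yields $z_1 r(z_1) \leq z_2 r(z_2)$, and combining this with $\alpha \leq \beta$ (all the relevant quantities being nonnegative) gives $\alpha\, z_1 r(z_1) \leq \alpha\, z_2 r(z_2) \leq \beta\, z_2 r(z_2)$. This is precisely the inequality needed, so $\frac{d}{dw}\log R(w) \leq 0$ and the claimed star ordering follows.
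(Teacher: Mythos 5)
Your argument is correct, and it reaches the conclusion by a route that differs from the paper's. The paper embeds both minima in the one-parameter family $\overline{F}_\alpha(x)=[\overline{F}(x)]^{\alpha}$, computes
$\frac{F'_{\alpha}(x)}{x f_{\alpha}(x)}=-\frac{1}{\alpha}\frac{\ln\overline{F}(x)}{x r(x)}$
(where the prime denotes $\partial/\partial\alpha$), shows this is increasing in $x$ exactly when $xr(x)$ is decreasing, and then invokes the Saunders--Moran criterion (Lemma~4 of the Appendix) to get $F_{\alpha}\geq_{*}F_{\alpha^{*}}$ for all $\alpha\leq\alpha^{*}$ at once. You instead fix the two specific distributions, write the ratio of quantile functions $R(w)=\overline{F}^{-1}(w^{1/\alpha})/\overline{F}^{-1}(w^{1/\beta})$, and differentiate $\log R$ directly; your reduction to $\alpha z_1 r(z_1)\leq \beta z_2 r(z_2)$ is right (one gets $\frac{d}{dw}\log R(w)=-\frac{1}{w}\bigl(\frac{1}{\alpha z_1 r(z_1)}-\frac{1}{\beta z_2 r(z_2)}\bigr)$), and the chain $z_1\geq z_2$, $z_1r(z_1)\leq z_2r(z_2)$, $\alpha\leq\beta$ closes the argument. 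The two proofs use the same hypothesis in the same way (monotonicity of $xr(x)$ absorbs the sign of the hazard-rate contribution), but yours is self-contained and avoids citing the lemma, while the paper's yields the slightly stronger statement that the whole family $\{\overline{F}^{\alpha}\}$ is monotone in the star order, from which the comparison of the two minima is read off. Both arguments share the same implicit regularity requirements ($z_i>0$, $f(z_i)>0$ on the relevant range), which the paper handles through the support assumptions in Lemma~4; it would be worth stating these explicitly in your version.
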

\begin{proof} 
The survival function of $X_{1:n_1}$ is 
\begin{equation}
\overline{F}_{1:n_1}(x)=[\overline{F}(x)]^{\displaystyle\sum_{i=1}^{n_1}\alpha_i}.
\end{equation}
Let $\displaystyle\sum_{i=1}^{n_1}\alpha_i= \alpha$, then $\overline{F}_{1:n_1}(x)=[\overline{F}(x)]^{\alpha}= \overline{F}_{\alpha}(x)$ (say).\\ The corresponding probability density function is 
\begin{align*}
f_{X_{1:n_1}}(x)&=\alpha f(x) [\overline{F}(x)]^{\alpha-1}\\
&=f_{\alpha}(x).
\end{align*}
Note that the ratio
\begin{equation}
\dfrac{F_{\alpha}^{\prime}(x)}{f_{\alpha}(x)}=-\dfrac{1}{\alpha}\dfrac{\ln\overline{F}(x)}{r(x)},
\end{equation}
where $F_{\alpha}(x)=1-[\overline{F}(x)]^{\alpha}$ and $F_{\alpha}^{\prime}(x)=\frac{d}{d \alpha}F_{\alpha}(x)$.\\
The  theorem follows by differentiating the ratio $\dfrac{F_{\alpha}^{\prime}(x)}{x f_{\alpha}(x)}$ with respect to $x$.\\
Note that 
\begin{align*}
\dfrac{d}{dx}\left(\dfrac{F_{\alpha}^{\prime}(x)}{x f_{\alpha}(x)}\right)&=\dfrac{1}{\alpha}\left(\dfrac{x(r(x))^2+(xr^{\prime}(x)+r(x))\ln\overline{F}(x)}{(xr(x))^2}\right)\\
&>0,
\end{align*}
whenever $xr(x)$ is decreasing in $x$. Now using Lemma \ref{lemma4}, we obtain $X_{1:n_1} \geq_{*} Y_{1:n_2}$ whenever $\displaystyle\sum_{i=1}^{n_1}\alpha_i \leq \displaystyle\sum_{i=1}^{n_2}\beta_i$. \qed
\end{proof}

We observe here that the hazard rate functions of $X_{1:n_1}$ and $Y_{1:n_2}$  are $r_{X_{1:n_1}}(x)=\displaystyle\sum_{i=1}^{n_1}\alpha_i r(x)$ and $r_{Y_{1:n_2}}(x)=\displaystyle\sum_{i=1}^{n_2}\beta_i r(x)$ respectively, where $r(x)$ is the hazard rate function of baseline distribution $F(x)$. Then 
$$r_{X_{1:n_1}}(x) \leq r_{Y_{1:n_2}}(x) \text{ whenever } \displaystyle\sum_{i=1}^{n_1}\alpha_i \leq \displaystyle\sum_{i=1}^{n_2}\beta_i.$$
The above result is applicable for multiple-outlier models where the parameters are as described in Corollary 1. Moreover this theorem can be considered as a more general form of theorem 3.9 from \cite{var}. Here the parameters are all different and only a simple inequality exists between them.\\

Next we consider a reversed hazard rate ordering result for the parallel system having Pareto distributed components such that the sample sizes are equal. 

\begin{theorem}
Let $X_1,X_2,\ldots,X_n$ and $Y_1,Y_2,\ldots,Y_n$ be two sets of n-independent Pareto distributed random variables such that the survival function of $X_i$ is $\overline{F}_i(x)=\left(1+\dfrac{x}{\theta}\right)^{-\alpha_i} ,~x>0,~\theta>0,~\alpha_i>0$ and that of $Y_i$ is $\overline{G}_i(x)=\left(1+\dfrac{x}{\theta}\right)^{-\alpha_i^*} ,~x>0,~\theta>0,~\alpha_i^*>0$. Let $\underline{\alpha}=(\alpha_1,\alpha_2,\ldots,\alpha_n)$, $\underline{\alpha}^*=(\alpha_1^*,\alpha_2^*,\ldots,\alpha_n^*)$, then $\underline{\alpha} \prec^{w} \underline{\alpha}^*$ $\Rightarrow ~ X_{n:n}\leq_{rh} Y_{n:n}$.
\end{theorem}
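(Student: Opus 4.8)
The plan is to work directly with the reversed hazard rate functions, exploiting the fact that for the maximum of independent PHR-type components the reversed hazard rate is additively separable across components. Since each $X_i$ has distribution function $1-(1+x/\theta)^{-\alpha_i}$, the maximum satisfies
\[
F_{n:n}(x)=\prod_{i=1}^{n}\bigl(1-u^{-\alpha_i}\bigr),\qquad u:=1+\tfrac{x}{\theta}>1,
\]
and similarly for $G_{n:n}$ with the $\alpha_i^*$. Differentiating $\log F_{n:n}$ gives the reversed hazard rate
\[
\tilde r_{X_{n:n}}(x)=\frac{1}{\theta}\sum_{i=1}^{n}h(\alpha_i;u),\qquad h(a;u):=\frac{a\,u^{-a-1}}{1-u^{-a}}.
\]
By the definition of the reversed hazard order, $X_{n:n}\le_{rh}Y_{n:n}$ is equivalent to $\tilde r_{X_{n:n}}(x)\le\tilde r_{Y_{n:n}}(x)$ for every $x$, i.e. to $\Psi(\underline\alpha)\le\Psi(\underline\alpha^*)$ for every fixed $u>1$, where $\Psi(\underline a):=\sum_{i=1}^{n}h(a_i;u)$.

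Next I would reduce the majorization hypothesis to a one-variable analytic statement via the classical transfer principle: a symmetric function which is decreasing in each coordinate and Schur-convex sends $\underline a\prec^{w}\underline b$ to $\Psi(\underline a)\le\Psi(\underline b)$ (the $n=1$ case, where $\prec^{w}$ reads $a\ge b$, already pins down the direction). Because $\Psi$ is the additively separable sum $\sum_i h(a_i;u)$, its Schur-convexity is equivalent to the convexity of $a\mapsto h(a;u)$ and its coordinatewise monotonicity to $h$ being decreasing. So the entire theorem comes down to showing that, for each fixed $u>1$, the scalar function $h(\cdot;u)$ is both decreasing and convex on $(0,\infty)$.

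To handle $h$ I would rewrite it in a cleaner form. Setting $c:=\ln u>0$ and using $u^{-a}/(1-u^{-a})=1/(u^{a}-1)$ gives
\[
h(a;u)=e^{-c}\,\frac{a}{e^{ac}-1}=\frac{e^{-c}}{c}\,B(ac),\qquad B(t):=\frac{t}{e^{t}-1},
\]
so both required properties follow from properties of the single universal function $B$, since the affine substitution $t=ac$ and the positive prefactor preserve monotonicity and convexity. Monotonicity is the easy half: $B'(t)=\bigl(e^{t}(1-t)-1\bigr)/(e^{t}-1)^{2}$, and the numerator vanishes at $t=0$ with derivative $-t\,e^{t}<0$, so $B'<0$ on $(0,\infty)$.

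The genuinely delicate step is the convexity of $B$. Here I would use the identity $B(t)=\tfrac{t}{2}\coth\tfrac{t}{2}-\tfrac{t}{2}$; the linear term is irrelevant, so convexity of $B$ reduces to convexity of $s\mapsto s\coth s$ with $s=t/2$. A direct computation gives
\[
\frac{d^{2}}{ds^{2}}\,(s\coth s)=\frac{2\,(s\coth s-1)}{\sinh^{2}s},
\]
which is positive for $s>0$ because $\coth s>1/s$ there, whence $s\coth s>1$. Thus $B$ is convex, $h(\cdot;u)$ is decreasing and convex, $\Psi$ is decreasing and Schur-convex, and the transfer principle yields $\Psi(\underline\alpha)\le\Psi(\underline\alpha^*)$ for all $u>1$, i.e. $X_{n:n}\le_{rh}Y_{n:n}$. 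I expect the convexity inequality $s\coth s>1$ to be the only step requiring care; the remainder is routine bookkeeping.
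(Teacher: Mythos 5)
Your proof is correct, and its overall skeleton is the same as the paper's: write the reversed hazard rate of the maximum as a positive factor times the additively separable sum $\sum_i g(\alpha_i)$ with $g(\alpha)=\alpha/(u^{\alpha}-1)$, show $g$ is decreasing and convex in $\alpha$ for each fixed $u>1$, invoke Schur-convexity of separable sums of convex functions, and finish with the Marshall--Olkin transfer principle for $\prec^{w}$ (decreasing $+$ Schur-convex). Where you genuinely diverge is in the verification of the key analytic fact: the paper establishes $g''>0$ and $g'<0$ by brute-force differentiation, tracking signs through two nested auxiliary functions $\phi$ and $\phi_1$ with boundary values at $u=1$; you instead substitute $t=\alpha\ln u$ to reduce everything to the single universal function $B(t)=t/(e^{t}-1)$, and prove its convexity via the identity $B(t)=\tfrac{t}{2}\coth\tfrac{t}{2}-\tfrac{t}{2}$ together with $s\coth s>1$. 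Your route is cleaner and more illuminating -- it isolates the $u$-dependence into an affine change of variable and a positive prefactor, so the convexity claim becomes a one-line statement about a classical function rather than a sign chase -- while the paper's computation is self-contained and avoids the hyperbolic identity. All the intermediate identities in your write-up check out, and the $n=1$ sanity check correctly fixes the direction of the conclusion.
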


\begin{proof}
The distribution function of $X_{n:n}$ is
\begin{equation}
F_{X_{n:n}}(x)=\displaystyle\prod_{i=1}^n\left[1-\left(1+\dfrac{x}{\theta}\right)^{-\alpha_i}\right],
\end{equation}
and the corresponding reversed hazard rate function is
\begin{equation}
\tilde{r}_{X_{n:n}}(x)=\dfrac{1}{x+\theta}\displaystyle\sum_{i=1}^n g(\alpha_i),
\end{equation}
where $g(\alpha)=\dfrac{\alpha}{\left(\dfrac{x}{\theta}+1\right)^{\alpha}-1}$. Let $u=\left(\dfrac{x}{\theta}+1\right)^{\alpha}$ and $u>1$ such that $g(\alpha)=\dfrac{\alpha}{u^{\alpha}-1}$. Now,
\begin{equation}
g'(\alpha)=\dfrac{u^{\alpha}(1-\alpha\ln u)-1}{(u^{\alpha}-1)^2}
\end{equation}
and
\begin{equation}
g''(\alpha)=\dfrac{u^{\alpha}\ln u((u^{\alpha}\ln u+ \ln u)\alpha-2u^{\alpha}+2)}{(u^{\alpha}-1)^3}.
\end{equation}
$g''(\alpha)\overset{sgn}{=}u^{\alpha}(\ln u)  \phi(u)$, where $\phi(u)= (u^{\alpha}\ln u+ \ln u)\alpha-2u^{\alpha}+2$, $\phi(1)=0$
Also,
\begin{align*}
\phi'(u)&={\alpha}^2u^{\alpha-1}\ln u +\dfrac{\alpha}{u}-\alpha u^{\alpha-1}\\
&=\dfrac{\alpha}{u} \phi_1(u),
\end{align*}
such that $\phi_1(u)=\alpha u^{\alpha}\ln u +1-u^{\alpha}$ and $\phi_1(1)=0$. And 
\begin{align*}
\phi_1'(u)&={\alpha}^2u^{\alpha-1} \ln u\\
&>0.
\end{align*}
Hence it is observed that $g''(\alpha) > 0$ for $x ~>~0~(u~>~1)$, i.e., $g(\alpha)$ is convex in $\alpha$. Hence, using Lemma \ref{lemma2} we obtain, $\tilde{r}_{X_{n:n}}(x)$ is Schur convex w.r.t $\underline{\alpha}$. Moreover, 
\begin{align*}
g'(\alpha)&\overset{sgn}{=} u^{\alpha}(1-\alpha\ln u)-1, ~u>1\\
&=h(u) \text{ say, }
\end{align*} 
then $h'(u)= -{\alpha}^2u^{\alpha-1}\ln u$. Also $h(1)=0$, then $g'(\alpha) < 0$ for $x>0~(u>1)$. Thus $\tilde{r}_{X_{n:n}}(x)$ is decreasing in $\underline{\alpha}$ and Schur convex w.r.t $\underline{\alpha}$. Using Lemma \ref{lemma3}, we infer that $\underline{\alpha} \prec^{w} \underline{\alpha}^*$ $\Rightarrow ~ \tilde{r}_{X_{n:n}}(x) \leq \tilde{r}_{Y_{n:n}}(x)$. Hence the result follows. \qed
\end{proof}
\section{Dependent model}
In this section we have considered a dependent set of random variables instead of independent random variables as discussed in the earlier sections. Hence we shall observe few definitions required especially to study the dependent models.
\begin{defn}\textbf{Survival copula:}
Let $(X_1, \ldots, X_n)$ be a n-dimensional random vector defined on a probability space $(\Omega, \mathbb{F}, \mathbb{P})$, the multivariate survival function is defined as $$\overline{F}(x_1, \ldots, x_n) = P[X_1 > x_1, \ldots, X_n > x_n] = \tilde{C}(\overline{F}_1(x_1),\ldots, \overline{F}_n(x_n)), ~x_1, \ldots, x_n \in \mathbb{R},$$ where  $\tilde{C}$ is the n-dimensional survival copula of the random vector $(X_1, \ldots, X_n)$. \\
$\tilde{C}$ is a continuous function defined over the n-dimensional space as $\tilde{C} : [0,1]^n \mapsto [0,1]$, to develop multivariate survival functions from the marginal survival functions.
\end{defn}
Archimedean copula is a very widely used class of survival copula because of its analytical tractability.
\begin{defn}\textbf{Archimedean copula}
	
A n-dimensional \emph{Archimedean copula} $\tilde{C} : [0,1]^n \mapsto [0,1]$  is represented as  $$\tilde{C}(u_1, \dots, u_n) = \psi(\psi^{-1}(u_1)+\ldots+\psi^{-1}(u_n))  , ~u_k \in [0,1] ~\text{for}~ k=1, \dots, n,$$

where the survival copula $\tilde{C}$ is generated by the generator function (also known as Archimedean generator function) $\psi : [0,\infty) \mapsto [0,1]$, $\psi$ is  n -monotone ($n \geq 2$) over an open interval $I$ $\subset \mathbb{R}$ (where the end points of the interval $I$ belongs to the limit point of $\mathbb{R}$) if $\psi$ has derivatives upto order $n-2$ and 
$$(-1)^{r}{\psi}^{(r)}(x) \geq 0 ~~ \text{for } r=0,1,2,\ldots,n-2$$
for any $x \in I$ and also $(-1)^{(n-2)}{\psi}^{(n-2)}$ is non-increasing and convex over $I$.  $\phi=\psi^{-1}$ is the corresponding inverse function. Clayton copula, Frank copula are few archimedean copulas studied in the literature. 
\end{defn}

For a detailed discussion on Archimedean Copula one can refer to \cite{18}.
Now we shall discuss a lemma, that is used further in understanding the forth coming theorems. 

\begin{lemma} \label{lemmaj1}
Let $Y_1,Y_2,\ldots, Y_n$ be n random variables such that $Y_i=X-\mu_i$, where $\mu_i$ for $i=1,2,\ldots,n$ ($P[X>x]=\overline{F}(x)$) are the corresponding location parameters respectively, then the survival function of the minimum of $Y_1,Y_2,\ldots, Y_n$ \\($P[\min\{Y_1,Y_2,\ldots, Y_n\}> x]$) is given by
\normalfont 
\begin{equation}\label{J1}
 J_1(\underline{\mu};\overline{F}(x),\psi) = \psi(\sum_{k=1}^n \phi(\overline{F}(x+\mu_k))) , 
 \end{equation}  
\begin{enumerate}
	\item $J_1$is decreasing in $\mu_i$ for each $i$,
	\item $J_1$ is Schur-concave (Schur-convex) in $\underline{\mu}$ whenever $\psi$ is log-convex (log-concave) and $F$ is IFR (DFR) distribution.
\end{enumerate}    	
	\end{lemma}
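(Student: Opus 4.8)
The plan is to begin by recording the representation \eqref{J1} itself. Since $Y_i = X-\mu_i$, the marginal survival function of $Y_i$ is $P[Y_i>x]=P[X>x+\mu_i]=\overline{F}(x+\mu_i)$, and because the $Y_i$ share the Archimedean survival copula $\tilde C(u_1,\dots,u_n)=\psi\!\left(\sum_k \phi(u_k)\right)$ with $\phi=\psi^{-1}$, the joint survival function evaluated at $x_1=\dots=x_n=x$ gives
\[
P[\min\{Y_1,\dots,Y_n\}>x]=\tilde C\big(\overline F(x+\mu_1),\dots,\overline F(x+\mu_n)\big)=\psi\!\left(\sum_{k=1}^n\phi\big(\overline F(x+\mu_k)\big)\right),
\]
which is \eqref{J1}. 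Throughout I would use that an Archimedean generator ($n$-monotone with $n\ge 2$) is non-increasing and convex, so $\psi'\le 0$, and that its inverse $\phi$ is then also non-increasing, so $\phi'\le 0$.

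For part (1) I would simply differentiate. Writing $S(\underline{\mu})=\sum_k\phi(\overline F(x+\mu_k))$, the chain rule gives
\[
\frac{\partial J_1}{\partial\mu_i}=\psi'(S)\,\phi'\big(\overline F(x+\mu_i)\big)\,\big(-f(x+\mu_i)\big),
\]
which is a product of the three non-positive factors $\psi'(S)$, $\phi'(\overline F(x+\mu_i))$ and $-f(x+\mu_i)$; hence $\partial J_1/\partial\mu_i\le 0$, and $J_1$ decreases in each $\mu_i$.

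For part (2), note that $J_1$ depends on $\mu_1,\dots,\mu_n$ only through the symmetric sum $S$, so it is a symmetric function and I would invoke the Schur--Ostrowski criterion: $J_1$ is Schur-concave (Schur-convex) iff $(\mu_i-\mu_j)\big(\partial_{\mu_i}J_1-\partial_{\mu_j}J_1\big)\le 0\ (\ge 0)$ for all $i,j$. Setting $w(z)=\phi(\overline F(z))$ and $z_i=x+\mu_i$, the derivative above reads $\partial_{\mu_i}J_1=\psi'(S)\,w'(z_i)$, so the Schur--Ostrowski expression equals $\psi'(S)\,(z_i-z_j)\big(w'(z_i)-w'(z_j)\big)$. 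Since $\psi'(S)\le 0$, its sign is opposite to that of $(z_i-z_j)(w'(z_i)-w'(z_j))$. Thus the entire problem reduces to the convexity of the single-variable composition $w=\phi\circ\overline F$: if $w$ is convex then $J_1$ is Schur-concave, and if $w$ is concave then $J_1$ is Schur-convex.

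The main work — and the step I expect to be the principal obstacle — is to show that $w=\phi\circ\overline F$ is convex precisely under ``$\psi$ log-convex and $F$ IFR'' (and concave under ``$\psi$ log-concave and $F$ DFR''). I would compute $w''$ using $\phi'(\overline F)=1/\psi'(w)$ and $\phi''(\overline F)=-\psi''(w)/\psi'(w)^3$, then substitute $\overline F=\psi(w)$, $f=r\,\overline F$ and $f'=\overline F\,(r'-r^2)$, where $r=f/\overline F$ is the baseline hazard rate. After factoring out the non-negative quantity $-\psi(w)/\psi'(w)$ this should collapse to
\[
w''\ \overset{\text{sgn}}{=}\ r^2\left(\frac{\psi''(w)\,\psi(w)}{\psi'(w)^2}-1\right)+r'.
\]
The bracketed coefficient equals $(\ln\psi)''(w)\cdot \psi(w)^2/\psi'(w)^2$, hence is non-negative iff $\psi$ is log-convex, while $r'\ge 0$ iff $F$ is IFR; both hypotheses then force $w''\ge 0$, so $w$ is convex and $J_1$ is Schur-concave. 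Reversing every inequality (log-concave generator, DFR baseline) makes $w''\le 0$, so $w$ is concave and $J_1$ is Schur-convex. The delicate point is to organize the computation so that the hazard-rate contribution $r'$ and the log-convexity contribution separate additively and carry the same sign; once that separation is achieved the conclusion is immediate.
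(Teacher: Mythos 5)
Your proof is correct and takes essentially the same route as the paper: both reduce the Schur-concavity (Schur-convexity) claim via the Schur--Ostrowski criterion to the monotonicity in $\mu_i$ of the per-coordinate derivative, which is driven by exactly the two hypotheses you isolate (monotone baseline hazard rate and log-convexity/log-concavity of $\psi$). The only cosmetic difference is that you certify this monotonicity by computing $w''$ for $w=\phi\circ\overline{F}$ explicitly, whereas the paper writes $-w'(x+\mu_i)=r(x+\mu_i)\,\psi(v_i)/\psi'(v_i)$ with $v_i=\phi(\overline{F}(x+\mu_i))$ and argues it is decreasing as the product of a non-negative increasing factor and a non-positive decreasing factor, thereby avoiding second derivatives.
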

\begin{proof}
\normalfont
Consider 
\begin{equation} 
J_1(\underline{\mu};\overline{F}(x),\psi) = \psi(\sum_{k=1}^n \phi(\overline{F}(x+\mu_k))).  \label{j1} 
\end{equation}\\ Differentiating $\eqref{j1}$ w.r.t $\mu_i$, $i=1,\ldots,n$
\begin{equation}
\frac{\partial J_1}{\partial \mu_i} = - \psi^{\prime}\left(\sum_{k=1}^n \phi (\overline{F}(x+ \mu_k))\right)\frac{f(x+\mu_i)}{\overline{F}(x+\mu_i)}\cdot \frac{\psi(\phi(\overline{F}(x+\mu_i)))}{\psi^{\prime}(\phi(\overline{F}(x+\mu_i)))} \leq 0 ~\forall x \text{ and } \forall i.
\end{equation}
$\therefore J_1$ is decreasing in $\mu_i$.\\
Let $r(x) = \dfrac{f(x)}{\overline{F}(x)}$ and $\rho_1(\mu_i;x) = \dfrac{f(x +\mu_i)}{\overline{F}(x+ \mu_i)} \cdot\dfrac{\psi(\phi(\overline{F}(x+\mu_i)))}{\psi^{\prime}(\phi(\overline{F}(x+\mu_i)))}. $\\

$r(x+\mu_i)$ is increasing (decreasing) in $\mu_i$ and $\psi$ is log-convex (log-concave) $\Rightarrow \rho_1(\mu_i;x)$ is decreasing (increasing) in $\mu_i$.\\
Then for $i \neq j$,
\begin{align*}
\Delta &= (\mu_i -\mu_j)\left(\frac{\partial J_1}{\partial \mu_i} - \frac{\partial J_1}{\partial \mu_j}\right)\\
&= - \psi^{\prime}\left(\sum_{k=1}^n \phi (\overline{F}(x+ \mu_k))\right)\cdot (\mu_i -\mu_j)\cdot(\rho_1(\mu_i;x) - \rho_1(\mu_j;x))\\
& \leq (\geq) 0.
\end{align*}
Using Lemma \ref{lemma1} $J_1$ is Schur-concave (Schur-convex) in $\underline{\mu}$. \qed
\end{proof}
Earlier \cite{min} and \cite{max} have discussed about the stochastic ordering between two systems where the component lifetimes are independent and each belongs from a location-scale family, necessarily with the same baseline distribution function. The next theorems establishes the conditions under which a series system (parallel) can be compared with another series (parallel) system, where all the  component lifetimes are dependent and the baseline distribution functions are different.
\begin{theorem}{\label{ac1}}
Let $Y_1,Y_2,\ldots, Y_n$ be n random variables such that $Y_i=X-\mu_i$,  ($P[X>x]=\overline{F}(x)$) where $\mu_i$ for $i=1,2,\ldots,n$ are the corresponding location parameters respectively, then the survival function of the minimum of $Y_1,Y_2,\ldots, Y_n$ ($P[\min\{Y_1,Y_2,\ldots, Y_n\}> x]$) is given by
\normalfont \[ J_1(\underline{\mu};\overline{F}(x),\psi_1) = \psi_1(\sum_{k=1}^n \phi_1(\overline{F}(x+\mu_k))) , \]  $\psi_1$ is log-convex (log-concave) and $F$ is IFR (DFR) distribution.
If there exists another set of n random variables $Z_1,Z_2,\ldots,Z_n$ ($Z_i=W-\mu_i^{*}$ and $P[W>x]=\overline{G}(x)$) such that the survival function for the minimum of $Z_1,Z_2,\ldots,Z_n$ is 
\normalfont \[ J_1(\underline{\mu}^{*};\overline{G}(x),\psi_2) = \psi_2(\sum_{k=1}^n \phi_2(\overline{G}(x+\mu_k^{*}))) , \] then as $(\mu_1,\mu_2,\ldots,\mu_n) \prec_{w} (\prec^{w}) ~(\mu_1^{*},\mu_2^{*},\ldots,\mu_n^{*})$ we obtain $Y_{1:n}\geq_{st} (\leq_{st}) Z_{1:n}$ as $\psi$ is log-convex (log-concave) , $X \geq_{st} W$ and $F$ is IFR (DFR) distribution.
\end{theorem}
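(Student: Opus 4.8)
The plan is to establish the usual stochastic order by comparing the two survival functions pointwise. Since $\overline{F}_{Y_{1:n}}(x)=J_1(\underline{\mu};\overline{F}(x),\psi)$ and $\overline{F}_{Z_{1:n}}(x)=J_1(\underline{\mu}^{*};\overline{G}(x),\psi)$ (taking, as the stated hypotheses indicate, a common generator $\psi=\psi_1=\psi_2$), proving $Y_{1:n}\geq_{st}Z_{1:n}$ amounts to showing
\[
J_1(\underline{\mu};\overline{F}(x),\psi)\;\geq\;J_1(\underline{\mu}^{*};\overline{G}(x),\psi)\qquad\text{for every }x.
\]
I would fix $x$ and insert the intermediate quantity $J_1(\underline{\mu}^{*};\overline{F}(x),\psi)$, splitting the comparison into a \emph{location} step (replacing $\underline{\mu}$ by $\underline{\mu}^{*}$ with the baseline held at $\overline{F}$) and a \emph{baseline} step (replacing $\overline{F}$ by $\overline{G}$ with the location vector held at $\underline{\mu}^{*}$).

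For the location step I would invoke Lemma \ref{lemmaj1}: on the baseline $\overline{F}$ the map $\underline{\mu}\mapsto J_1(\underline{\mu};\overline{F}(x),\psi)$ is decreasing in each coordinate and, when $\psi$ is log-convex and $F$ is IFR, Schur-concave (respectively Schur-convex when $\psi$ is log-concave and $F$ is DFR). Combining decreasingness with the Schur property through the Appendix weak-majorization lemmas (of the type used in the preceding Pareto theorem), a decreasing Schur-concave function sends $\underline{\mu}\prec_{w}\underline{\mu}^{*}$ to $J_1(\underline{\mu};\overline{F}(x),\psi)\geq J_1(\underline{\mu}^{*};\overline{F}(x),\psi)$, while a decreasing Schur-convex function sends $\underline{\mu}\prec^{w}\underline{\mu}^{*}$ to the reverse inequality. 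This is precisely the pairing $\bigl(\prec_{w},\ \text{log-convex},\ \text{IFR}\bigr)$ versus $\bigl(\prec^{w},\ \text{log-concave},\ \text{DFR}\bigr)$ recorded in the statement.

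For the baseline step I would note that $J_1(\underline{\mu}^{*};\,\cdot\,,\psi)$ is increasing in each argument $\overline{F}(x+\mu_k^{*})$: writing $u_k=\overline{F}(x+\mu_k^{*})$ and $J_1=\psi\!\bigl(\textstyle\sum_k\phi(u_k)\bigr)$, one has $\partial J_1/\partial u_k=\psi'\!\bigl(\sum_k\phi(u_k)\bigr)\,\phi'(u_k)\geq 0$, since the generator $\psi$ and its inverse $\phi$ are both decreasing. Hence $X\geq_{st}W$, i.e. $\overline{F}(x+\mu_k^{*})\geq\overline{G}(x+\mu_k^{*})$ for all $k$, yields $J_1(\underline{\mu}^{*};\overline{F}(x),\psi)\geq J_1(\underline{\mu}^{*};\overline{G}(x),\psi)$. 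Chaining the two steps in the log-convex/IFR branch gives $J_1(\underline{\mu};\overline{F}(x),\psi)\geq J_1(\underline{\mu}^{*};\overline{G}(x),\psi)$ for all $x$, which is exactly $Y_{1:n}\geq_{st}Z_{1:n}$.

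The main obstacle is bookkeeping the \emph{directions}: the conclusion closes only when the location comparison and the baseline comparison reinforce rather than oppose one another. In the log-convex/IFR/$\prec_{w}$ branch both steps point the same way and the argument runs cleanly; for the log-concave/DFR/$\prec^{w}$ branch I would have to verify that the baseline hypothesis is oriented consistently, so that the monotone–Schur reversal and the $X\geq_{st}W$ comparison fuse into a single inequality. Throughout, the delicate point is matching each weak-majorization relation ($\prec_{w}$ versus $\prec^{w}$) to the correct Schur property furnished by Lemma \ref{lemmaj1}, together with the global monotonicity in each $\mu_i$ that the weak-majorization criterion requires.
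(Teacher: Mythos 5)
Your decomposition (fix $x$, insert the intermediate quantity with $\underline{\mu}$ replaced by $\underline{\mu}^{*}$, then handle the location comparison via Lemma \ref{lemmaj1} together with the weak-majorization Lemma \ref{lemma3}, and the baseline comparison via monotonicity of $\psi(\sum_k\phi(\cdot))$ in each argument and $X\geq_{st}W$) is exactly the skeleton of the paper's proof, and your direction bookkeeping for the two branches is right; you also correctly flag that the second branch needs $X\leq_{st}W$ rather than the $X\geq_{st}W$ printed in the statement. The one substantive divergence is your decision to read $\psi_1=\psi_2$. The paper intends two \emph{distinct} Archimedean generators (one per dependent sample) and inserts a third, middle step: at the fixed argument vector $\bigl(\overline{F}(x+\mu_k^{*})\bigr)_k$ it passes from $\psi_1\bigl(\sum_k\phi_1(\cdot)\bigr)$ to $\psi_2\bigl(\sum_k\phi_2(\cdot)\bigr)$ using super-additivity of $\phi_1\circ\psi_2$ (resp.\ $\phi_2\circ\psi_1$): writing $v_k=\phi_2(u_k)$, super-additivity gives $\phi_1\bigl(\psi_2(\sum_k v_k)\bigr)\geq\sum_k\phi_1(u_k)$, and applying the decreasing map $\psi_1$ yields $\psi_1\bigl(\sum_k\phi_1(u_k)\bigr)\geq\psi_2\bigl(\sum_k\phi_2(u_k)\bigr)$. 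This hypothesis is stated in the abstract and in the companion theorem for maxima but was evidently dropped from this theorem's statement, which is why your reading is defensible; still, under your interpretation you prove only the special case of a common generator, and to recover the theorem as the authors use it you would need to add the super-additive condition as a hypothesis and splice in this generator-comparison step between your location step and your baseline step.
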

\begin{proof} 
 It is observed that $(\mu_1,\mu_2,\ldots,\mu_n) \prec_{w} (\prec^{w})~ (\mu_1^{*},\mu_2^{*},\ldots,\mu_n^{*}) \Rightarrow J_1(\underline{\mu};\overline{F}(x),\psi_1) \geq (\leq)~ J_1(\underline{\mu}^{*};\overline{F}(x),\psi_1)$ using Lemma \ref{lemmaj1} and Lemma \ref{lemma3}. Now we are required to show that $\psi_1(\displaystyle\sum_{k=1}^n \phi_1(\overline{F}(x+\mu_k))) \geq (\leq) \psi_2(\displaystyle\sum_{k=1}^n \phi_2(\overline{G}(x+\mu_k^{*})))$. 
 $$\psi_1(\displaystyle\sum_{k=1}^n \phi_1(\overline{F}(x+\mu_k^{*}))) \geq (\leq)\psi_2(\displaystyle\sum_{k=1}^n \phi_2(\overline{F}(x+\mu_k^{*})))$$
 as $\phi_1\cdot\psi_2 (\phi_2\cdot\psi_1)$ is super-additive. Also using the condition $X \geq_{st} (\leq_{st}) W$ we have $\overline{F}(x+\mu_k^{*}) > (<) \overline{G}(x+\mu_k^{*})$ for all $x$, this further implies that $\psi_2(\displaystyle\sum_{k=1}^n\phi_2(\overline{F}(x+\mu_k^{*}))) > (<) \psi_2(\displaystyle\sum_{k=1}^n\phi_2(\overline{G}(x+\mu_k^{*})))$.\\
 Hence the result follows. \qed
\end{proof}

Next we introduce $J_2(\underline{\mu};F(x),\psi) =1-\psi\left(\displaystyle\sum_{k=1}^n \phi(F(x + \mu_k)) \right)$. It can be observed that $J_2(\underline{\mu};x,\psi)$ is the survival function of the maximum order statistic from the set $Y_1,Y_2,\ldots,Y_n$ where $Y_i= X- \mu_i$ for each $i=1,2,\ldots,n$ , the components are dependent (we deal here with the same set of components as used in theorem \ref{ac1}).
\begin{lemma}\label{lemmaj2}
	$$J_2(\underline{\mu};F(x),\psi) =1-\psi\left(\displaystyle\sum_{k=1}^n \phi(F(x + \mu_k)) \right)$$
	$J_2(\underline{\mu};F(x),\psi)$	is decreasing in $\mu_i$ for each $i$, and $J_2$ is Schur-concave (Schur-convex) in $\underline{\mu}$ whenever $\psi$ is log-convex (log-concave) and $F$ is IRFR(DRFR) distribution.
\end{lemma}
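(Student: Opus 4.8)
The plan is to follow the template of Lemma \ref{lemmaj1} almost verbatim, with the survival function $\overline{F}$ replaced by the distribution function $F$ and the hazard rate $r$ replaced by the reversed hazard rate $\tilde r$. Writing $S=\sum_{k=1}^n\phi(F(x+\mu_k))$ and using the inverse-function identities $\phi'(u)=1/\psi'(\phi(u))$ and $\psi(\phi(u))=u$, I would first differentiate $J_2$ to obtain
\[
\frac{\partial J_2}{\partial \mu_i}=-\psi'(S)\,\frac{f(x+\mu_i)}{F(x+\mu_i)}\cdot\frac{\psi(\phi(F(x+\mu_i)))}{\psi'(\phi(F(x+\mu_i)))}.
\]
Since $\psi\ge 0$, $\psi'\le 0$ and $\tilde r\ge 0$, the three factors carry signs $(\ge 0)$, $(\ge 0)$, $(\le 0)$, so the derivative is $\le 0$ for every $x$ and every $i$, which gives part (1), namely that $J_2$ is decreasing in each $\mu_i$.

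For part (2) I would set
\[
\rho_2(\mu_i;x)=\frac{f(x+\mu_i)}{F(x+\mu_i)}\cdot\frac{\psi(\phi(F(x+\mu_i)))}{\psi'(\phi(F(x+\mu_i)))},
\]
so that $\partial J_2/\partial\mu_i=-\psi'(S)\,\rho_2(\mu_i;x)$, and then form, for $i\ne j$,
\[
\Delta=(\mu_i-\mu_j)\left(\frac{\partial J_2}{\partial\mu_i}-\frac{\partial J_2}{\partial\mu_j}\right)=-\psi'(S)\,(\mu_i-\mu_j)\bigl(\rho_2(\mu_i;x)-\rho_2(\mu_j;x)\bigr).
\]
Because $-\psi'(S)\ge 0$, the sign of $\Delta$ is governed entirely by the monotonicity of $\rho_2$ in $\mu_i$: a decreasing (increasing) $\rho_2$ yields $\Delta\le 0$ ($\ge 0$), and the Schur-Ostrowski criterion, Lemma \ref{lemma1}, then delivers Schur-concavity (Schur-convexity). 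The crux is therefore the reversed-hazard analogue of the step ``$r(x+\mu_i)$ increasing and $\psi$ log-convex $\Rightarrow\rho_1$ decreasing'' used for $J_1$: I would split $\rho_2$ into the reversed-hazard factor $\tilde r(x+\mu_i)$, whose monotonicity in $\mu_i$ is exactly the IRFR/DRFR hypothesis, and the generator factor $\psi(\phi(F))/\psi'(\phi(F))=1/(\log\psi)'(\phi(F))$, whose monotonicity in $\mu_i$ follows from the log-convexity/log-concavity of $\psi$ (which renders $(\log\psi)'$ monotone) composed with the decreasing map $\phi$ and the increasing map $\mu_i\mapsto F(x+\mu_i)$.

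The main obstacle, and the place where this argument genuinely departs from that of Lemma \ref{lemmaj1}, is combining these two monotonicities into a product with a single, definite direction. In the minimum case $\overline{F}(x+\mu_i)$ is \emph{decreasing} in $\mu_i$, whereas here $F(x+\mu_i)$ is \emph{increasing}, so the generator factor reverses its direction of monotonicity relative to the $J_1$ computation. I would therefore track all four signs with care, namely $\tilde r\ge 0$ and monotone according to the IRFR/DRFR hypothesis, together with the generator factor, which is $\le 0$ and monotone according to the log-convexity/concavity of $\psi$, and determine the pairing under which one factor increases while the other decreases, so that $\rho_2$ moves monotonically in one direction and $\Delta$ acquires a fixed sign. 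Once that sign is pinned down, Lemma \ref{lemma1} closes the argument exactly as in the proof of $J_1$.
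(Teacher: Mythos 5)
Your proposal reproduces the paper's proof of this lemma almost line for line: the same formula for $\partial J_2/\partial\mu_i$, the same $\rho_2$, the same $\Delta$, and the same appeal to Lemma \ref{lemma1}. Part (1) and the reduction of Schur-concavity (Schur-convexity) to the monotonicity of $\rho_2(\mu_i;x)$ in $\mu_i$ are fine and match the paper exactly.

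The gap is the step you explicitly defer and never carry out: establishing that $\rho_2$ is in fact decreasing (increasing) under the IRFR (DRFR) and log-convexity (log-concavity) hypotheses. This is not a routine sign-chase, and your stated plan for it is based on a misconception: a product of two factors of fixed sign is monotone when, after normalizing both to be nonnegative, the two factors move in the \emph{same} direction, not opposite directions. Write $\rho_2(\mu_i;x)=-\tilde r(x+\mu_i)\,h\bigl(\phi(F(x+\mu_i))\bigr)$ with $h(t)=-\psi(t)/\psi'(t)\ge 0$; log-convexity of $\psi$ makes $\psi'/\psi$ increasing and hence $h$ increasing in $t$. In Lemma \ref{lemmaj1} the inner argument $\phi(\overline F(x+\mu_i))$ is increasing in $\mu_i$, so both nonnegative factors $r(x+\mu_i)$ and $h(\phi(\overline F(x+\mu_i)))$ increase and the product argument closes. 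Here, as you correctly observe, $\phi(F(x+\mu_i))$ is \emph{decreasing} in $\mu_i$, so under IRFR and log-convexity the factor $\tilde r(x+\mu_i)$ increases while $h(\phi(F(x+\mu_i)))$ decreases, and the monotonicity of the product is genuinely indeterminate. Concretely, for the Clayton generator one computes $h(\phi(u))=u^{-\theta}$, so $\rho_2=-f(x+\mu_i)/F(x+\mu_i)^{1+\theta}$; taking $F(y)=e^{y}$ on $y<0$ (constant, hence weakly increasing, reversed hazard rate) gives $\rho_2=-e^{-\theta(x+\mu_i)}$, which is \emph{increasing} in $\mu_i$ --- the opposite of what the claimed pairing requires. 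So the step you flagged cannot be settled by sign-tracking alone; it would need either an extra hypothesis linking the growth of $\tilde r$ to the decay of the generator factor, or a different decomposition. For what it is worth, the paper's own proof asserts this implication in a single line without addressing the reversal, so your instinct that this is precisely where the argument departs from Lemma \ref{lemmaj1} is correct --- but the proposal as written does not close it, and the closing move you sketch would not work.
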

\begin{proof}
\begin{equation} \label{j2}
J_2(\underline{\mu};x,\psi) = 1-\psi\left(\displaystyle\sum_{k=1}^n \phi(F(x + \mu_k)) \right)
\end{equation}
Differentiating \eqref{j2} with respect to each $\mu_i$, we have 
\begin{equation} 
\frac{\partial J_2}{\partial \mu_i} = -\psi^{\prime}\left(\displaystyle\sum_{k=1}^n \phi(F(x + \mu_k)) \right) \cdot \dfrac{f(x+\mu_i)}{F(x+\mu_i)} \cdot \dfrac{\psi(\phi(F(x+\mu_i)))}{\psi^{\prime}(\phi(F(x+\mu_i)))} ~ \leq 0.
\end{equation}
$\therefore ~ J_2$ is decreasing in $\mu_i$.\\

Let $\tilde{r}(x) =\dfrac{f(x)}{F(x)}$ and $\rho_2(\mu_i;x) = \dfrac{f(x+\mu_i)}{F(x+\mu_i)} \cdot \dfrac{\psi(\phi(F(x+\mu_i)))}{\psi^{\prime}(\phi(F(x+\mu_i)))}$.\\

$\tilde{r}(x +\mu_i)$ is increasing (decreasing) in $\mu_i$ and $\psi$ is log-convex (log-concave) $\Rightarrow \rho_2(\mu_i;x)$ is decreasing (increasing) in $\mu_i$.\\
$\therefore$ for $i \neq j$,
\begin{align*}
\Delta &= (\mu_i -\mu_j)\left(\frac{\partial J_2}{\partial \mu_i} - \frac{\partial J_2}{\partial \mu_j}\right)\\ 
&=  - \psi^{\prime}\left(\displaystyle\sum_{k=1}^n \phi({F}(x+ \mu_k))\right)\cdot(\mu_i -\mu_j)(\rho_2(\mu_i;x) - \rho_2(\mu_j;x)) \\
&= \leq (\geq) 0.
\end{align*}
Thus using Lemma \ref{lemma1}, $J_2$ is Schur-concave (Schur-convex) in $\underline{\mu}$. \qed
\end{proof}
\begin{theorem}
Let $Y_1,\ldots, Y_n$ and $Z_1,\ldots,Z_n$ be two n-dimensional random variables such that $Y_i= X-\mu_i$ and $Z_i = W-\mu_i*$, $i=1,2,\ldots,n$. Then
$$\underline{\mu} \prec_{w} \underline{\mu}^{*}~ (\underline{\mu} \prec^{w} \underline{\mu^{*}}) ~ \Rightarrow Y_{n:n} \geq_{st}(\leq_{st}) Z_{n:n}$$
whenever  $\psi_1$or $\psi_2$ is log-convex (log-concave), $F$ is IRFR(DRFR) distribution, $X \geq_{st} (\leq_{st}) W$ and $\phi_2\cdot\psi_1 (\phi_1\cdot\psi_2)$ is super additive.
\end{theorem}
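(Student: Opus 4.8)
The plan is to follow the architecture of the proof of Theorem \ref{ac1} essentially verbatim, replacing the minimum functional $J_1$ by the maximum functional $J_2$ and the hazard rate by the reversed hazard rate. The survival functions of the two maxima are
\[
P[Y_{n:n}>x]=J_2(\underline{\mu};F(x),\psi_1),\qquad P[Z_{n:n}>x]=J_2(\underline{\mu}^{*};G(x),\psi_2),
\]
so the assertion $Y_{n:n}\geq_{st}(\leq_{st})Z_{n:n}$ is precisely the pointwise inequality $J_2(\underline{\mu};F(x),\psi_1)\geq(\leq)J_2(\underline{\mu}^{*};G(x),\psi_2)$ for every $x$. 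I would establish it by inserting the intermediate term $J_2(\underline{\mu}^{*};F(x),\psi_1)$ and proving the chain
\[
J_2(\underline{\mu};F(x),\psi_1)\ \geq(\leq)\ J_2(\underline{\mu}^{*};F(x),\psi_1)\ \geq(\leq)\ J_2(\underline{\mu}^{*};G(x),\psi_2).
\]

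The first link is the majorization step. Because $F$ is IRFR (DRFR) and $\psi_1$ is log-convex (log-concave), Lemma \ref{lemmaj2} shows $J_2(\cdot;F(x),\psi_1)$ is decreasing in each coordinate and Schur-concave (Schur-convex). Feeding $\underline{\mu}\prec_{w}(\prec^{w})\underline{\mu}^{*}$ into Lemma \ref{lemma3} then gives the first inequality, the two majorization notions pairing with the decreasing-Schur-concave and decreasing-Schur-convex cases exactly as in Theorem \ref{ac1}.

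For the second link I would split once more through $J_2(\underline{\mu}^{*};F(x),\psi_2)$, handling the generator and the baseline separately. The baseline part is routine: $X\geq_{st}(\leq_{st})W$ means $F(x+\mu_k^{*})\leq(\geq)G(x+\mu_k^{*})$ for every $k$, and monotonicity of the decreasing maps $\phi_2$ and $\psi_2$ propagates this to $J_2(\underline{\mu}^{*};F(x),\psi_2)\geq(\leq)J_2(\underline{\mu}^{*};G(x),\psi_2)$. The generator part is the crux. With $u_k=F(x+\mu_k^{*})$ it requires $\psi_1(\sum_k\phi_1(u_k))\leq(\geq)\psi_2(\sum_k\phi_2(u_k))$; note the direction is reversed relative to Theorem \ref{ac1}, because the complementation $J_2=1-\psi(\cdots)$ flips the sign. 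Applying the decreasing map $\phi_2$ and setting $v_k=\phi_1(u_k)$ reduces this to $(\phi_2\cdot\psi_1)(\sum_k v_k)\geq(\leq)\sum_k(\phi_2\cdot\psi_1)(v_k)$, i.e. super-additivity of $\phi_2\cdot\psi_1$; in the log-concave (DRFR) case it becomes sub-additivity of $\phi_2\cdot\psi_1$, equivalently (since $\phi_2\cdot\psi_1$ is an increasing bijection with inverse $\phi_1\cdot\psi_2$) super-additivity of $\phi_1\cdot\psi_2$. This is exactly the stated hypothesis.

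I expect the generator step to be the main obstacle, owing to the sign bookkeeping caused by the complementation in $J_2$ and to the need to check the reduction to super-additivity uniformly in $x$ (the $u_k$ sweep through $[0,1]$ as $x$ varies). I would also flag the disjunction ``$\psi_1$ or $\psi_2$'' in the hypothesis: the chain above used the Schur property of $J_2(\cdot;F,\psi_1)$ and hence log-convexity of $\psi_1$, but one may instead route through $J_2(\underline{\mu};G(x),\psi_2)$, performing the generator and baseline steps first and the majorization step last; that variant invokes the Schur property of $J_2(\cdot;G,\psi_2)$ and hence log-convexity of $\psi_2$ instead, which is what the ``or'' records.
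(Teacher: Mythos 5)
Your proposal follows the paper's proof essentially verbatim: the same three-link chain through the intermediate quantities $J_2(\underline{\mu}^{*};F(x),\psi_1)$ and $J_2(\underline{\mu}^{*};F(x),\psi_2)$, with the majorization step handled by Lemma \ref{lemmaj2} together with Lemma \ref{lemma3}, the generator step by super-additivity of $\phi_2\cdot\psi_1$ (resp. $\phi_1\cdot\psi_2$), and the baseline step by $X\geq_{st}(\leq_{st})W$. Your explicit reduction of the generator inequality to super-additivity, and your remark on the ``$\psi_1$ or $\psi_2$'' disjunction, are in fact more careful than the paper's own write-up, but the route is the same.
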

\begin{proof} The survival function of $Y_{n:n} $ and $Z_{n:n}$ are 
	\begin{equation} 
	J_2(\underline{\mu};F(x),\psi_1) = 1-\psi_1\left(\displaystyle\sum_{k=1}^n \phi_1(F(x + \mu_k)) \right),
	\end{equation}
	\begin{equation}
	J_2(\underline{\mu};G(x),\psi_2) = 1-\psi_2\left(\displaystyle\sum_{k=1}^n \phi_2(G(x + \mu_k^{*})) \right)
	\end{equation}
	respectively.\\
It is sufficient to show that $\psi_1\left(\displaystyle\sum_{k=1}^n \phi_1(F(x + \mu_k)) \right) \leq(\geq) \psi_2\left(\displaystyle\sum_{k=1}^n \phi_2(G(x + \mu_k^{*})) \right)$
 Using Lemma \ref{lemmaj2} and Lemma \ref{lemma3} it can derived that \\$\underline{\mu} \prec_{w} \underline{\mu}^{*}~ (\underline{\mu} \prec^{w} \underline{\mu}^{*}) ~ \Rightarrow \psi_1\left(\displaystyle\sum_{k=1}^n \phi_1(F(x + \mu_k))\right) \leq(\geq) \psi_1\left(\displaystyle\sum_{k=1}^n \phi_1(F(x + \mu_k^{*}))\right)$.\\
 Since $\phi_2\cdot\psi_1 (\phi_1\cdot\psi_2)$ is super additive, we observe that 
 $$\psi_1\left(\displaystyle\sum_{k=1}^n \phi_1(F(x + \mu_k^{*}))\right) \leq(\geq) \psi_2\left(\displaystyle\sum_{k=1}^n \phi_2(F(x + \mu_k^{*}))\right)$$
 Lastly $X \geq_{st} (\leq_{st}) W$ implies that $\psi_2\left(\displaystyle\sum_{k=1}^n \phi_2(F(x + \mu_k^{*}))\right) \leq(\geq) \psi_2\left(\displaystyle\sum_{k=1}^n \phi_2(G(x + \mu_k^{*}))\right)$. Hence proved. \qed
\end{proof}
When we take the generator function $\psi(x)=\exp(-x)$, $\phi(x)=-\ln x$. This generator indicates the independence copula (when the random variables are independent). Subsequently one can obtain the usual stochastic ordering between two sets of independent random variables.\\

Consider the Clayton copula generator function as
$$\psi_{\theta}(x) = \max((1+\theta x)^{-1/{\theta}},0), ~\theta >~0.$$
The above Archimedean generator is completely monotone (n-monotone for every $n \in \mathbb{N}$) for $\theta >0$, and hence generates an Archiedean Copula.
Here $\psi_{\theta}$ is a log-convex function, and hence the above theorems hold for this archimedean generator.
 
\section{Conclusion}
The results discussed in this paper can be divided into 3 subparts as Proportional Hazard rate (PHR) model, Proportional Reversed Hazard rate (PRHR) model, Dependent model. For PHR model we considered different models, a generalized situation where we consider two sets of independent PHR random variables and the baseline distribution for both the sets are different ($X_1, X_2,\ldots,X_{n_1}$ such that $ X_i \sim \overline{F}_i(x) = (\overline{F}_{0}(x))^{\alpha_i}$ for $i=1, 2,\ldots, n_1$ and another set $Y_1,Y_2,\ldots,Y_{n_2}$ , $Y_i \sim \overline{G}_i(x) = (\overline{G}_{0}(x))^{\beta_i}$ for $i=1, 2,\ldots, n_2$ ). We have obtained conditions over the parameters and the baseline distributions so that a dispersive ordering exist between the minimum order statistics. Whereas when both the baseline distributions are same, star ordering occurs between these minimum order statistics provided $xr(x)$ is decreasing. Since Pareto distribution is also PHR model, a reversed hazard rate ordering occurs between the sample maximums (also known as parallel systems) when the shape parameter varies. Proceeding similarly we have observed a general result for PRHR model too. Here the two sets of random variables follow different baseline distributions and the number of samples are also unequal ($X_i \sim F_i(x) = (F_{0}(x))^{\alpha_i}$ for $i=1, 2,\ldots, n_1$ and $Y_i \sim G_i(x) = (G_{0}(x))^{\beta_i}$ for $i=1, 2,\ldots, n_2$). All of these results are true for multiple-outlier models.\\
Another form of generalized model has been studied where $X_1,\ldots,X_{p_1}$ has survival function $[\overline{F}(x)]^{\alpha_i}$ and $X_{p_1+1},\ldots,X_{n_1}$ has survival function $[\overline{G}
(x)]^{\alpha_i}$. Similarly another system with $n_2$ components is considered where the components $Y_1,\ldots,Y_{p_2}$ has survival function as $[\overline{F}(x)]^{\beta_i}$ whereas the components $Y_{p+1},\ldots,Y_{n_2}$ has survival function $[\overline{G}
(x)]^{\beta_i}$ and hazard rate ordering results has been observed for series systems. A reversed hazard rate ordering result with PRHR components has been observed.\\
In the last section, dependent random variables have been studied. Here we obtained usual stochastic ordering results between two sample minimums and two sample maximums such that the location parameter corresponding to the random variables from two sets obeys a weak majorization ordering while the baseline distribution obeys a usual stochastic ordering and the generating functions follows super-additive property.

\section{Acknowledgements}
The first author M.D. would like to thank IIT Kharagpur for research assistantship.


\begin{thebibliography}{}
%
%
\bibitem{9}
Amini-Seresht, E., Qiao, J., Zhang, Y. and Zhao, P., On the skewness of order statistics in multiple-outlier PHR models, Metrika 79.7, 817--836 (2016).

\bibitem{17}
Balakrishnan, N. and Zhao, P., Ordering properties of order statistics from heterogeneous populations: a review with an emphasis on some recent developments, Probability in the Engineering and Informational Sciences 27.4, 403--443(2013).

\bibitem{10}
Balakrishnan, N., and  Torrado, N., Comparisons between largest order statistics from multiple-outlier models, Statistics 50.1, 176--189(2016).

\bibitem{14}
Barlow, R. E., and Proschan, F., Statistical theory of reliability and life testing: probability models. Florida State Univ Tallahassee (1975).

\bibitem{r4}
Navarro, J., Torrado, N., and del Aguila, Y., Comparisons between largest order statistics from multiple-outlier models with dependence, Methodology and Computing in Applied Probability, 20.1, 411--433(2018).

\bibitem{12}
Block, H. W., Savits, T. H., and Singh, H., The reversed hazard rate function, Probability in the Engineering and Informational Sciences 12.1, 69--90(1998).

\bibitem{11}
Cai, X., Zhang, Y., and Zhao, P., Hazard rate ordering of the second-order statistics from multiple-outlier PHR samples, Statistics 51.3, 615--626(2017).

\bibitem{own}
Datta, M., and Gupta, N., Stochastic comparison study of series and parallel systems having Kumaraswamy’s and Frechet distributed components, Accepted in the Journal of the Indian Statistical Association (2017).

\bibitem{13}
Ali, D., Towhidi, M., and Shekari, M., Stochastic and dependence comparisons between extreme order statistics in the case of proportional reversed hazard model, Journal of The Iranian Statistical Society 10.1, 29--43(2011).

\bibitem{3}
Jeon, J., Kochar, S., and Park, C. G., Dispersive ordering -- Some applications and examples, Statistical Papers 47.2, 227--247(2006).

\bibitem{5}
Kochar, S. C., and Xu, M., Some recent results on stochastic comparisons and dependence among order statistics in the case of PHR model, Journal of Reliability and Statistical Studies 6.2, 125--140(2007).

\bibitem{6}
Kochar, S., and Xu, M., Comparisons of parallel systems according to the convex transform order, Journal of Applied Probability 46.2, 342--352(2009).

\bibitem{16}
Marshall, Albert W and Olkin, Ingram and Arnold, Barry C,  Inequalities: theory of majorization and its applications. Springer (1979).

\bibitem{18}
McNeil, A. J., and Nešlehová, J., Multivariate Archimedean copulas, d-monotone functions and $\textit{l}_1$-norm symmetric distributions, The Annals of Statistics 37.5B, 3059--3097(2009).

\bibitem{4}
P{\u a}lt{\u a}nea, E., On the comparison in hazard rate ordering of fail-safe systems, Journal of Statistical Planning and Inference 138.7, 1993--1997(2008).

\bibitem{1}
Pledger, P., and Proschan, F., Comparisons of order statistics and of spacings from heterogeneous distributions, Optimizing methods in statistics, 89--113(1971).

\bibitem{2}
Saunders, I. W., and Moran, P. A. P., On the quantiles of the gamma and F distributions, Journal of Applied Probability 15.2, 426--432(1978).

\bibitem{15}
Shaked, M., and Shanthikumar, J.G., Stochastic orders. Springer Science \& Business Media (2007).

\bibitem{7}
Zhao, Peng, and N. Balakrishnan, Dispersive ordering of fail-safe systems with heterogeneous exponential components, Metrika 74.2, 203--210(2011).

\bibitem{8}
Zhao, P., and Balakrishnan, N., Stochastic comparisons of largest order statistics from multiple-outlier exponential models,  Probability in the Engineering and Informational Sciences 26.2,  159--182(2012).

\bibitem{var}
Zhang, Y., Ding, W., and Zhao, P., On variability of series and parallel systems with heterogeneous components, Probability in the Engineering and Informational Sciences, 1--20(2019).

\bibitem{range}
Balakrishnan, N., Chen, J, Zhang, Y., and Zhao, P., Comparisons of sample ranges arising from multiple-outlier models: In memory of Moshe Shaked, Probability in the Engineering and Informational Sciences, 28--49(2019).

\bibitem{resilience}
Zhang, Y., Cai, X., Zhao, P., and Wang, H., Stochastic comparisons of parallel and series systems with heterogeneous resilience-scaled components, Statistics, 53.1, 126--147(2019). 

\bibitem{min}
Hazra, N. K., Kuiti, M. R., Finkelstein, M., and Nanda, A. K., On stochastic comparisons of minimum order statistics from the location–scale family of distributions, Metrika, 81.2, 105--123(2018).


\bibitem{max}
Hazra, N. K., Kuiti, M. R., Finkelstein, M., and Nanda, A. K., On stochastic comparisons of maximum order statistics from the location-scale family of distributions, Journal of Multivariate Analysis, 160, 31--41(2017).


\end{thebibliography}


 \section*{Appendix}
\subsection{Useful results}

\begin{lemma}[Theorem 3.A.4, see {\cite{16}}]\label{lemma1}
	\normalfont Let $$ \Delta=(a_{i} - a_{j})\left(\dfrac{\partial\psi(\underline{a})}{\partial a_{i}} - \dfrac{\partial\psi(\underline{a})}{\partial a_{j}}\right),$$
	for an open interval $\mathbb{A} \subset \mathbb{R}$, a continuously differentiable function $\psi :\mathbb{A}^{n} \rightarrow \mathbb{R}$ is Schur-convex (Schur-concave) if and only if it is symmetric on $\mathbb{A}^{n}$ and for all $i \neq j$, $\Delta \geq(\leq)0.$
\end{lemma}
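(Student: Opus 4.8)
The plan is to prove this as the classical Schur--Ostrowski criterion, reducing the majorization relation to elementary two-coordinate transfers and then analysing $\psi$ along a single one-parameter differentiable path. I would first record two standard preliminaries. \emph{Symmetry:} any two rearrangements of a vector majorize one another, so a Schur-convex (Schur-concave) $\psi$ is automatically symmetric; symmetry is thus forced in the ``only if'' direction and assumed in the ``if'' direction. \emph{Transfer decomposition} (Hardy--Littlewood--Pólya): $\underline{a} \prec \underline{b}$ holds if and only if $\underline{a}$ is obtained from $\underline{b}$ by a finite sequence of Robin Hood transfers (T-transforms), each affecting only two coordinates $b_i \ge b_j$ and replacing them by $(b_i - s,\, b_j + s)$ with $0 \le s \le (b_i - b_j)/2$. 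Since $\mathbb{A}$ is an interval (hence convex), every intermediate coordinate stays in the convex hull of the original ones and so remains in $\mathbb{A}$, keeping the whole path inside the domain. Throughout I write $\partial_i\psi$ for $\partial\psi/\partial a_i$.

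For necessity, fix $\underline{a} \in \mathbb{A}^n$ and $i \ne j$, and consider the curve $\gamma(t)$ that adds $t$ to the $i$-th coordinate and subtracts $t$ from the $j$-th, leaving the rest fixed; this preserves the coordinate sum. When $a_i > a_j$, taking $t>0$ small spreads the two coordinates apart, so $\gamma(t) \succ \underline{a}$, and Schur-convexity forces the derivative of $\psi(\gamma(t))$ at $t=0$, namely $\partial_i\psi - \partial_j\psi$, to be $\ge 0$; multiplying by $a_i - a_j > 0$ gives $\Delta \ge 0$. When $a_i < a_j$ the same perturbation brings the coordinates closer, so $\gamma(t) \prec \underline{a}$, the derivative is $\le 0$, and multiplying by $a_i - a_j < 0$ again yields $\Delta \ge 0$; the case $a_i = a_j$ is trivial. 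Hence Schur-convexity implies $\Delta \ge 0$ everywhere, and reversing every inequality shows Schur-concavity implies $\Delta \le 0$.

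For sufficiency, I would assume $\psi$ symmetric with $\Delta \ge 0$ and show $\psi$ is nondecreasing under a single transfer; composing over the finite transfer chain then gives $\underline{a} \prec \underline{b} \Rightarrow \psi(\underline{a}) \le \psi(\underline{b})$, which is Schur-convexity. By symmetry it suffices to treat a transfer on coordinates $1,2$ with $b_1 \ge b_2$. Setting $g(s) = \psi(b_1 - s,\, b_2 + s,\, b_3,\dots,b_n)$ on $[0,(b_1-b_2)/2]$, the chain rule gives $g'(s) = -(\partial_1\psi - \partial_2\psi)$ evaluated at a point whose first coordinate $b_1 - s$ is at least its second coordinate $b_2 + s$. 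The hypothesis $\Delta \ge 0$ at that point, together with $(b_1 - s) - (b_2 + s) \ge 0$, forces $\partial_1\psi - \partial_2\psi \ge 0$, so $g'(s) \le 0$; thus $g$ is nonincreasing and $\psi(\underline{a}) = g(s) \le g(0) = \psi(\underline{b})$. The Schur-concave case is identical with every inequality reversed.

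I expect the only genuine obstacle to be the transfer-decomposition lemma itself: the necessity and sufficiency arguments above are just the fundamental theorem of calculus applied to a straight-line path, but the reduction of an arbitrary majorization $\underline{a} \prec \underline{b}$ to a finite chain of two-coordinate Robin Hood transfers is the substantive combinatorial input. As this is the classical Hardy--Littlewood--Pólya decomposition (also recorded in \cite{16}), I would invoke it rather than reprove it, after which the differentiable analysis closes the argument in both directions.
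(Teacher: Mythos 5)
The paper does not prove this lemma at all: it is quoted verbatim as Theorem 3.A.4 of Marshall--Olkin--Arnold \cite{16} and used as a black box, so there is no internal proof to compare against. Your argument is the standard Schur--Ostrowski proof from that reference --- symmetry via rearrangement-majorization, necessity by differentiating along the spreading path $t\mapsto(\ldots,a_i+t,\ldots,a_j-t,\ldots)$, sufficiency by monotonicity of $\psi$ along a single T-transform combined with the Hardy--Littlewood--P\'olya decomposition --- and it is correct, including the point (often glossed over) that convexity of $\mathbb{A}$ keeps the intermediate vectors of the transfer chain inside the domain. Invoking the transfer-decomposition lemma rather than reproving it is entirely reasonable here.
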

\begin{lemma}[Proposition 3.C.1, see {\cite{16}}]\label{lemma2}
	\normalfont If $\mathbb{A} \subset \mathbb{R}$ is an interval and $h: \mathbb{A} \rightarrow \mathbb{R}$ is convex (concave), then $\psi(\underline{a}) = \displaystyle\sum_{i=1}^n h(a_i)$ is Schur-convex (Schur-concave) on $\mathbb{A}^n$, where $\underline{a} = (a_1, \ldots, a_n)$.
\end{lemma}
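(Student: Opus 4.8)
The plan is to reduce the statement to a two-coordinate question and exploit the characterization of majorization by elementary transfers, while noting that the differentiable case can alternatively be dispatched through the Schur--Ostrowski criterion already recorded as Lemma \ref{lemma1}. Throughout, symmetry of $\psi$ is immediate: since $\psi(\underline{a}) = \sum_{i=1}^n h(a_i)$ depends only on the multiset $\{a_1,\ldots,a_n\}$, permuting the arguments leaves $\psi$ unchanged, so the only content lies in the monotonicity-under-majorization condition.

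First, when $h$ is (continuously) differentiable, I would apply Lemma \ref{lemma1} directly. Here $\partial\psi/\partial a_i = h'(a_i)$, so for $i\neq j$
\[
\Delta = (a_i - a_j)\left(\frac{\partial\psi}{\partial a_i} - \frac{\partial\psi}{\partial a_j}\right) = (a_i - a_j)\bigl(h'(a_i) - h'(a_j)\bigr).
\]
If $h$ is convex then $h'$ is nondecreasing, so $a_i - a_j$ and $h'(a_i) - h'(a_j)$ carry the same sign and $\Delta \ge 0$; Lemma \ref{lemma1} then gives Schur-convexity. The concave case is identical with both inequalities reversed, yielding $\Delta \le 0$ and Schur-concavity.

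The main obstacle is that a general convex $h$ need not be differentiable, so Lemma \ref{lemma1} does not apply as stated. To cover this, I would argue straight from the definition using the Hardy--Littlewood--P\'olya description of majorization: $\underline{a} \prec \underline{b}$ holds if and only if $\underline{a}$ is obtained from $\underline{b}$ by finitely many Robin Hood transfers (\emph{$T$-transforms}). It therefore suffices to check that $\psi$ does not increase under a single such transfer. A $T$-transform alters only two coordinates, say $i$ and $j$, replacing $(b_i,b_j)$ by $(a_i,a_j) = (\lambda b_i + (1-\lambda)b_j,\, (1-\lambda)b_i + \lambda b_j)$ for some $\lambda\in[0,1]$. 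Convexity of $h$ yields $h(a_i) \le \lambda h(b_i) + (1-\lambda)h(b_j)$ and $h(a_j) \le (1-\lambda)h(b_i) + \lambda h(b_j)$; adding these gives $h(a_i)+h(a_j) \le h(b_i)+h(b_j)$, while all remaining summands of $\psi$ are untouched. Hence $\psi(\underline{a}) \le \psi(\underline{b})$ across one transfer, and chaining over the finitely many transfers in the decomposition of $\underline{a}\prec\underline{b}$ proves Schur-convexity for arbitrary convex $h$; reversing every inequality handles the concave case. An alternative to the transfer argument is to approximate a general convex $h$ uniformly on compacta by smooth convex functions and pass to the limit in the differentiable case, but the $T$-transform route avoids any limiting.
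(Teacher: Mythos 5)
Your proof is correct. Note, however, that the paper does not prove this statement at all: it is imported verbatim as Proposition 3.C.1 of Marshall, Olkin and Arnold \cite{16} and listed in the appendix purely as a tool, so there is no in-paper argument to compare against. What you have written is essentially the standard textbook proof: the reduction to symmetry plus the Schur--Ostrowski criterion (the paper's Lemma \ref{lemma1}) handles the smooth case, and the Hardy--Littlewood--P\'olya decomposition of $\underline{a} \prec \underline{b}$ into finitely many $T$-transforms, combined with the two-point convexity inequality $h(\lambda b_i + (1-\lambda)b_j) + h((1-\lambda)b_i + \lambda b_j) \le h(b_i) + h(b_j)$, covers general convex $h$ without any differentiability assumption. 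The only point worth making explicit is that the intermediate vectors produced by the chain of $T$-transforms remain in $\mathbb{A}^n$ because each transfer replaces two coordinates by convex combinations of them and $\mathbb{A}$ is an interval; with that observation the argument is complete, and for the purposes of this paper the differentiable case alone would already suffice, since the lemma is only invoked for an explicitly twice-differentiable $g$ in the proof of the Pareto reversed-hazard-rate theorem.
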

\begin{lemma}[Theorem 3.A.8, see {\cite{16}}] \label{lemma3}
\normalfont	Let $S \subset \mathbb{R}^n$, a function $f: S \rightarrow \mathbb{R}$ satisfying $$\underline{a} \prec_w \underline{b} ~(\underline{a} \prec^w \underline{b}) \mbox{ on S } \Rightarrow f(\underline{a}) \leq f(\underline{b})$$ if and only if $f$ is increasing (decreasing) and Schur-convex on $S$.
\end{lemma}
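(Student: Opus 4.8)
The plan is to prove the stated equivalence by treating the weak submajorization / increasing version ($\underline{a} \prec_w \underline{b} \Rightarrow f(\underline{a}) \leq f(\underline{b})$ iff $f$ is increasing and Schur-convex) in full, and then obtaining the parenthetical weak supermajorization / decreasing version by the mirror argument. For necessity, suppose $f$ satisfies $\underline{a} \prec_w \underline{b} \Rightarrow f(\underline{a}) \leq f(\underline{b})$. First I would show $f$ is coordinate-wise increasing: if $\underline{a} \leq \underline{b}$ componentwise, then the sum of the $k$ largest entries of $\underline{a}$ never exceeds that of $\underline{b}$, so $\underline{a} \prec_w \underline{b}$ and the hypothesis forces $f(\underline{a}) \leq f(\underline{b})$. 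Schur-convexity is then immediate from the implication $\underline{a} \prec \underline{b} \Rightarrow \underline{a} \prec_w \underline{b}$ recorded in Definition 2: whenever $\underline{a} \prec \underline{b}$ we get $f(\underline{a}) \leq f(\underline{b})$, which is exactly the defining inequality for Schur-convexity.

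For sufficiency, suppose $f$ is increasing and Schur-convex and let $\underline{a} \prec_w \underline{b}$. The key step is to interpose a vector $\underline{u}$ with
$$\underline{a} \leq \underline{u} \quad\text{and}\quad \underline{u} \prec \underline{b}.$$
Granting this, monotonicity gives $f(\underline{a}) \leq f(\underline{u})$ and Schur-convexity gives $f(\underline{u}) \leq f(\underline{b})$, whence $f(\underline{a}) \leq f(\underline{b})$. To construct $\underline{u}$, I would arrange both vectors in decreasing order so that $\sum_{i=1}^k a_{i} \leq \sum_{i=1}^k b_{i}$ for every $k$, and raise the entries of $\underline{a}$ by a total amount $\sum_i b_i - \sum_i a_i \geq 0$ so as to equalize the total sums while never letting any partial sum of the re-sorted raised vector overshoot the corresponding partial sum of $\underline{b}$. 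A greedy/continuity argument (push each coordinate up, from the bottom, as far as the dominance constraints permit) terminates in a vector $\underline{u} \geq \underline{a}$ with $\sum_{i=1}^k u_{[i]} \leq \sum_{i=1}^k b_i$ for $k < n$ and equality at $k = n$, i.e. $\underline{u} \prec \underline{b}$; this is the completion lemma for weak majorization, see \cite{16}.

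The parenthetical statement follows by the symmetric argument. For necessity of the decreasing case, $\underline{a} \geq \underline{b}$ componentwise forces $\underline{a} \prec^w \underline{b}$ (the $k$ smallest entries of $\underline{a}$ dominate those of $\underline{b}$), so $f(\underline{a}) \leq f(\underline{b})$ and $f$ is decreasing, while $\underline{a} \prec \underline{b} \Rightarrow \underline{a} \prec^w \underline{b}$ again yields Schur-convexity. For sufficiency I would invoke the dual completion lemma: $\underline{a} \prec^w \underline{b}$ produces a $\underline{v}$ with $\underline{a} \geq \underline{v}$ and $\underline{v} \prec \underline{b}$ (now one lowers the entries of $\underline{a}$ to equalize the sums), so that decreasingness gives $f(\underline{a}) \leq f(\underline{v})$ and Schur-convexity gives $f(\underline{v}) \leq f(\underline{b})$.

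The hard part will be the completion lemma — the interposition of the fully majorized vector $\underline{u}$ (respectively $\underline{v}$) — since the reductions via monotonicity and Schur-convexity on either side of it are routine once that intermediate vector is in hand. All other pieces reduce to the elementary fact that componentwise order implies the appropriate weak majorization, together with the two implications of Definition 2.
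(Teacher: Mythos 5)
Your argument is correct, and it is essentially the canonical proof of this result: the paper itself states the lemma without proof, quoting Theorem 3.A.8 of Marshall--Olkin \cite{16}, so the only comparison to make is with that source, and your route coincides with it. The ``completion lemma'' you isolate as the hard step is exactly Marshall--Olkin's Proposition 5.A.9 (and its dual 5.A.9.a): $\underline{a} \prec_w \underline{b}$ iff $\underline{a} \leq \underline{u} \prec \underline{b}$ for some $\underline{u}$, and $\underline{a} \prec^w \underline{b}$ iff $\underline{a} \geq \underline{v} \prec \underline{b}$ for some $\underline{v}$; your greedy/continuity sketch for it is sound, and citing it outright is legitimate. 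One caveat worth a sentence in a polished write-up: for an arbitrary $S \subset \mathbb{R}^n$ (as the paper loosely states it) the sufficiency direction requires the interposed vector $\underline{u}$ (resp.\ $\underline{v}$) to lie in $S$, which your construction only guarantees when $S = \mathbb{R}^n$ or $S$ is closed under the completion operation; this is harmless for the paper's applications, since for $\underline{b} \in (0,\infty)^n$ the relation $\underline{v} \prec \underline{b}$ forces $\min_i v_i \geq \min_i b_i > 0$, and $\underline{u} \geq \underline{a}$ keeps $\underline{u}$ in $(0,\infty)^n$, so both completions remain in the positive orthant where the lemma is invoked.
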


\begin{lemma}[\cite{2}] \label{lemma4}
	\normalfont  Let {$F_{\alpha}$, $\alpha \in \mathbb{R}$} be a class of distribution functions such that the support of $F_{\alpha}$ is given by some interval $(x_0, x_1) \subset \mathbb{R^{+}}$ and has a non- vanishing density on any subinterval of $(x_0, x_1)$, where $x_0$ and  $x_1$ are the left and right end points respectively. Then
	\begin{equation} \label{l1}
	F_{\alpha} \leq_{disp} F_{\alpha^{*}}, ~\alpha,\alpha^{*} \in \mathbb{R}, ~ \alpha \leq \alpha^{*}, 
	\end{equation}
	if and only if $\dfrac{F^{'}_{\alpha}(x)}{f_{\alpha}(x)}$ is decreasing in $x$, where $F^{'}_{\alpha}$ is the derivative of $F_{\alpha}$ with respect to $\alpha$. \\
	And 
	\begin{equation} \label{l2}
	F_{\alpha} \leq_{*} F_{\alpha^{*}}, ~\alpha,\alpha^{*} \in \mathbb{R}, ~\alpha \leq \alpha^{*}, 
	\end{equation}
	if and only if $\dfrac{F^{'}_{\alpha}(x)}{x f_{\alpha}(x)}$ is decreasing in $x$, where $F^{'}_{\alpha}$ is the derivative of $F_{\alpha}$ with respect to $\alpha$. \\
	The first inequalities in \eqref{l1} and \eqref{l2} reverses as the quantity $\dfrac{F^{'}_{\alpha}(x)}{f_{\alpha}(x)}$ and $\dfrac{F^{'}_{\alpha}(x)}{x f_{\alpha}(x)}$ respectively increases in $x$.
\end{lemma}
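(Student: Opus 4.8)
\emph{Proof proposal.} The plan is to pass to the quantile functions $x_\alpha(p) := F_\alpha^{-1}(p)$, $p \in (0,1)$, and reduce both equivalences to a single monotonicity statement about $\partial x_\alpha(p)/\partial \alpha$. By the non-vanishing density hypothesis each $F_\alpha$ is strictly increasing on $(x_0,x_1)$, so $x_\alpha$ is well defined, strictly increasing in $p$, and (differentiating $F_\alpha(x_\alpha(p)) = p$ in $p$) satisfies $x_\alpha'(p) = 1/f_\alpha(x_\alpha(p)) > 0$. Recalling Definition 1, $F_\alpha \leq_{disp} F_{\alpha^*}$ means $x_{\alpha^*}(p) - x_\alpha(p)$ is increasing in $p$, while $F_\alpha \leq_{*} F_{\alpha^*}$ means $x_{\alpha^*}(p)/x_\alpha(p)$ is increasing in $p$; since the support lies in $\mathbb{R}^+$ the latter is equivalent to $\ln x_{\alpha^*}(p) - \ln x_\alpha(p)$ being increasing in $p$.

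The key computation is implicit differentiation of $F_\alpha(x_\alpha(p)) = p$ with respect to $\alpha$ at fixed $p$, which gives
\begin{equation*}
\frac{\partial x_\alpha(p)}{\partial \alpha} = -\frac{F_{\alpha}'(x_\alpha(p))}{f_\alpha(x_\alpha(p))},
\end{equation*}
where $F_{\alpha}' = \partial F_\alpha / \partial \alpha$. I would then invoke the bridging fact that, for a family ordered by $\alpha$, the integrated statement ``$x_{\alpha^*}(p) - x_\alpha(p)$ is increasing in $p$ for every $\alpha \leq \alpha^*$'' is equivalent to the infinitesimal statement ``$\partial x_\alpha(p)/\partial \alpha$ is increasing in $p$ for each $\alpha$'': the forward implication follows by writing $x_{\alpha^*}(p)-x_\alpha(p) = \int_\alpha^{\alpha^*} \partial_s x_s(p)\, ds$ and noting that an integral of functions increasing in $p$ is increasing in $p$, and the converse by forming the difference quotient in $\alpha$ and letting $\alpha^* \downarrow \alpha$ to recover the one-sided derivative as an increasing-in-$p$ limit.

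Combining these, $F_\alpha \leq_{disp} F_{\alpha^*}$ for all $\alpha \leq \alpha^*$ holds iff $-F_{\alpha}'(x_\alpha(p))/f_\alpha(x_\alpha(p))$ is increasing in $p$; because $p \mapsto x_\alpha(p)$ is strictly increasing, this is the same as $-F_{\alpha}'(x)/f_\alpha(x)$ being increasing in $x$, i.e. $F_{\alpha}'(x)/f_\alpha(x)$ decreasing in $x$, which is \eqref{l1}. For the star order the identical chain runs with $\ln$ in place of the difference: $\partial_\alpha \ln x_\alpha(p) = -F_{\alpha}'(x_\alpha(p))/(x_\alpha(p) f_\alpha(x_\alpha(p)))$, and monotonicity in $p$ translates (using $x_\alpha(p) > 0$ and $x_\alpha$ increasing) into $F_{\alpha}'(x)/(x f_\alpha(x))$ decreasing in $x$, which is \eqref{l2}. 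The reversal clauses are then immediate: if instead these ratios increase in $x$, every monotonicity above flips sign, yielding $F_{\alpha^*} \leq_{disp} F_\alpha$ and $F_{\alpha^*} \leq_{*} F_\alpha$ respectively.

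The hard part will be the bridging equivalence between the integrated and infinitesimal monotonicity statements together with the attendant regularity: one needs $\alpha \mapsto F_\alpha$ smooth enough to differentiate under the relevant limits, the interchange of integration with the ``increasing in $p$'' property, and the strict positivity of $f_\alpha$ (from the non-vanishing density hypothesis) to guarantee that $x_\alpha$ is differentiable and that composition with the increasing map $p \mapsto x_\alpha(p)$ preserves monotonicity. All remaining steps are routine manipulations of the quantile identity $F_\alpha(x_\alpha(p)) = p$.
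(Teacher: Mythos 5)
The paper states this lemma without proof, importing it directly from Saunders and Moran \cite{2}, so there is no internal proof to compare against; your quantile-differentiation argument---differentiating the identity $F_\alpha(x_\alpha(p))=p$ in $\alpha$ to get $\partial x_\alpha(p)/\partial\alpha = -F'_\alpha(x_\alpha(p))/f_\alpha(x_\alpha(p))$, bridging the pairwise ordering statements with the infinitesimal monotonicity in $p$ via integration in one direction and difference quotients in the other, and transporting monotonicity from $p$ to $x$ through the strictly increasing quantile map---is correct and is essentially the standard proof of this result as given in that source. The regularity you flag as the ``hard part'' (smoothness of $\alpha \mapsto F_\alpha$, strict positivity of $f_\alpha$ on $(x_0,x_1)$) is already implicitly assumed by the lemma's own statement, which presupposes $F'_\alpha$ exists and the density is non-vanishing, so your argument is complete modulo those standing hypotheses.
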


\end{document}